\documentclass[onefignum,onetabnum]{siamart220329}

\usepackage{amsfonts}
\usepackage{graphicx}
\usepackage{subcaption}
\usepackage{epstopdf}
\usepackage{hyperref}
\usepackage{nameref}
\usepackage{algorithmic}
\usepackage{eqparbox}

\ifpdf
  \DeclareGraphicsExtensions{.eps,.pdf,.png,.jpg}
\else
  \DeclareGraphicsExtensions{.eps}
\fi

\newsiamremark{remark}{Remark}
\newsiamremark{hypothesis}{Hypothesis}
\crefname{hypothesis}{Hypothesis}{Hypotheses}
\newsiamthm{claim}{Claim}

\headers{A spectral preconditioner for CG}{Y. Diouane, S. G\"urol, O. Mouhtal and D. Orban}

\def\mytitle{%
  A Spectral Preconditioner for the Conjugate Gradient Method with Iteration Budget
}
\title{\mytitle}

\author{Y. DIOUANE\thanks{GERAD and Department of Mathematics and Industrial Engineering, Polytechnique Montr\'eal. 
(\email{youssef.diouane@polymtl.ca},
\email{dominique.orban@polymtl.ca}).}
\and S.G\"urol\thanks{CERFACS / CECI
CNRS UMR 5318, Toulouse, France.
  (\email{gurol@cerfacs.fr}, \email{mouhtal@cerfacs.fr}).
  This work was funded by French National Programme LEFE/INSU.}
\and  O.MOUHTAL\footnotemark[3] \footnotemark[2]  
\and D.ORBAN\footnotemark[2]  
}

\usepackage{amsopn}
\DeclareMathOperator{\diag}{diag}

\usepackage[normalem]{ulem}
\usepackage[numbers]{natbib}
\usepackage{mathtools}
\usepackage{booktabs}
\usepackage{dsfont}
\usepackage{comment}
\usepackage{enumitem}
\usepackage[
  createShortEnv,
  conf={one big link},
  commandRef=Cref]{proof-at-the-end}

\NewDocumentEnvironment{proofEE}{O{}+b}{%
  \begin{proofE}[#1]
    #2
    \space
  \end{proofE}
}{}

\newcommand{\R}{\mathds{R}}

\newcommand{\om}[1]{\textcolor{red}{#1}}

\newcommand*{\bdg}{\ell_{\max}}

\ifpdf
\hypersetup{
  pdftitle={\mytitle},
  pdfauthor={Youssef Diouane, Selime G\"urol, Oussama Mouhtal and Dominique Orban}
}
\fi

\begin{document}

\maketitle
\pagestyle{myheadings}

\begin{abstract}
We study the solution of large symmetric positive-definite linear systems in a matrix-free setting with a limited iteration budget.
We focus on the preconditioned conjugate gradient (PCG) method with spectral preconditioning. Spectral preconditioners map a subset of eigenvalues to a positive cluster via a scaling parameter, and leave the remainder of the spectrum unchanged, in hopes to reduce the number of iterations to convergence.
We formulate the design of the spectral preconditioners as a constrained optimization problem.
The optimal cluster placement is defined to minimize the error in energy norm at a fixed iteration. This optimality criterion provides new insight into the design of efficient spectral preconditioners when PCG is stopped short of convergence.
We propose practical strategies for selecting the scaling parameter, hence the cluster position, that incur negligible computational cost.
Numerical experiments highlight the importance of cluster placement and demonstrate significant improvements in terms of error in energy norm, particularly during the initial iterations.
\end{abstract}

\begin{keywords}
Linear systems, matrix-free, conjugate gradient method, deflated CG, spectral preconditioner, error in energy norm.
\end{keywords}

\begin{MSCcodes}
68Q25, 65F08, 65F22
\end{MSCcodes}

\section{Introduction}

We consider large symmetric positive-definite (SPD) linear systems, where the matrix is implicitly represented as an \emph{operator}, using the conjugate gradient (CG) method \citep{Hestenes1952MethodsOC}. A standard approach to accelerate convergence is to introduce a preconditioner. There is no universal strategy for constructing effective preconditioners; we refer the reader to the surveys \citep{Preconditionning1,PearsonPestana2020,wathen2015preconditioning} for an overview.

One important class of preconditioners adapted to matrix-free settings is that of limited memory preconditioners (LMPs) \citep{LMP}. This approach is closely related to deflation~\citep{Frank, DeflatedCG}, augmentation methods \citep{coulaud2013deflation, Gaul}, balancing strategies \citep{nabben2006comparison}, and low-rank updates~\citep{Bergamaschi2020}. The main idea is to modify the eigenspectrum of the preconditioned matrix such that a small set of eigenvalues of the linear system matrix are either mapped to a positive scalar value \citep{LMP} or are removed \citep{Frank} in hopes to reduce the number of iterations to converge.

In this work, we focus on \textit{spectral preconditioners} \citep{carpentieri2003class, SenSpectLMP, LMP}, which are constructed using extreme eigenvalues and corresponding eigenvectors. These preconditioners come in two different types. The first type maps the extreme eigenvalues to a positive scalar, and is called a \textit{deflating preconditioner} in \citep{SenSpectLMP} and a \textit{spectral LMP} in \citep{LMP, TshiGratWeavSart08}. The second type shifts the extreme eigenvalues by a common positive value, and is called a coarse grid preconditioner \citep{carpentieri2003class}. In practice, the scalar value is often set to one~\citep{SenSpectLMP,LMP, TshiGratWeavSart08}. 

Both approaches are considered in \citep{SenSpectLMP}, where the authors analyze the eigenvalue distribution of the preconditioned systems to first-order approximation when the eigenspectrum is approximately known.
For instance, in large-scale weather prediction \citep{DataAssimilaton}, the spectral LMP is constructed using Ritz pairs extracted from PCG recurrences \citep{YoussSaad}. Alternatively, parallel randomized algorithms \citep{Halko} can be used to approximate extreme eigenvalues and their corresponding eigenvectors \citep{frangella2023randomized}. 

Our main objective is to design a spectral preconditioner that minimizes the error after $\ell$ steps, where $\ell$ is fixed, motivated by applications in which the solver must be stopped after a prescribed number of iterations. The resulting preconditioner is closely related to the deflating preconditioner \citep{SenSpectLMP, LMP} and is also connected to deflation techniques \citep{DeflatedCG}.

Formulating the requirements of the preconditioner as an optimization problem reveals its structure: the preconditioner maps a selected subset of eigenvalues to positive values while leaving the rest unchanged. Since explicitly computing this set of eigenvalues is impractical, we propose a strategy to identify the relevant part of the spectrum. We show that the preconditioner should be constructed from the extreme eigenvalues, and we propose several strategies to define the positioning of the cluster. These strategies are also shown to be linked to the energy norm of the error, which is precisely the quantity minimized by CG.

The paper is organized as follows. In \Cref{sec:Background}, we review the CG method and its convergence properties, and then discuss the characteristics of an efficient preconditioner. Our main contributions are presented in \Cref{sec:A scaled spectral preconditioner,sec:Choices of the scalling}, where we define the \emph{scaled} spectral preconditioner and analyze its properties. We also outline four strategies for selecting the scaling parameter, which determines the placement of the eigenvalue cluster, to \textit{reduce the total number of iterations and improve convergence in the early stages}. In \Cref{sec:NumExp}, we report numerical experiments on matrices with extreme eigenvalues. Conclusions and perspectives appear in \Cref{sec:conclusion}.

\section*{Notation}
\label{sec:notation}
Matrix $A \in \R^{n \times n}$ is always SPD.
Its spectral decomposition is $A = S \Lambda S^\top$ with $\Lambda = \text{diag}(\lambda_1, \ldots, \lambda_n)$, $ \lambda_1\geq \ldots \geq\lambda_n > 0$, and $S = \begin{bmatrix} s_1 & \cdots & s_n \end{bmatrix}$ being orthogonal. The notation $\mathrm{diag}(\lambda_1, \ldots, \lambda_n)$ denotes the diagonal 
matrix whose diagonal entries are $\lambda_1, \ldots, \lambda_n$. The $A$-norm, or \emph{energy norm}, of $x \in \mathbb{R}^n$ is $\|x\|_{A} =\sqrt{x^\top A x}$. The identity matrix of size $n$ is $I_n $. The set of eigenvalues of $X = X^T \in \mathbb{R}^{n\times n}$ is $\operatorname{spec}(X)$.
The cardinality of a subset $\pi \subseteq \{1, \ldots, n\}$ is $|\pi|$.

\bigskip

In what follows, all results are derived under the assumption of exact arithmetic.
\section{Background}
\label{sec:Background}
\subsection{Conjugate Gradient}
The CG \citep{Hestenes1952MethodsOC} is a method for $Ax = b$ with $A \in \R^{n \times n}$ SPD  and $b \in \R^{n}$. If $x_0 \in \R^{n}$ is an initial guess and $r_0 = b - A x_0$ is the initial residual, then at every step $\ell = 1, 2, \ldots, \mu$, CG produces a unique approximation \citep{YoussSaad}
\begin{equation}
    x_\ell \in x_0 + \mathcal{K}_\ell(A, r_0) \quad \text{such that} \quad r_\ell \perp \mathcal{K}_\ell(A, r_0),\label{eq:projProcess}
\end{equation}
where $\mathcal{K}_\ell(A, r_0):= \text{span}\{r_0, A r_0, \ldots, A^{\ell-1} r_0\}$ is the $\ell$-th Krylov subspace generated by $A$ and $r_0$ and $\mu$ is the grade of $r_0$ with respect to $A$, i.e., the maximum dimension of the Krylov subspace generated by $A$ and $r_0$ \citep{YoussSaad}. In exact arithmetic $x_\mu = x^\ast$, where $x^\ast$ is the exact solution. 
The most popular and efficient implementation of~\eqref{eq:projProcess} is the original formulation of \citet{Hestenes1952MethodsOC}, which recursively updates coupled 2-term recurrences for $x_{\ell+1}$, the residual $r_{\ell+1} := b - A x_{\ell+1}$, and the search direction ${p}_{\ell+1}$. \Cref{Algo:CG} states the complete procedure. A common stopping criterion is based on sufficient decrease of the relative residual norm $\|r_\ell\|_2/\|r_0\|_2$. However, in certain practical implementations, such as data assimilation, a fixed number of iterations is used as a stopping criterion due to budget constraints.
The preconditioned CG (PCG) variant is also presented, together with its companion formulation, \Cref{Algo:PCG}, which will be detailed in \Cref{subsec:Properties of a good preconditioner}.

\noindent
\begin{minipage}[t]{0.47\linewidth}
\begin{algorithm}[H]
\caption{CG\vphantom{PCG}}
\label{Algo:CG}
\begin{algorithmic}[1]
\STATE $r_0 = b - Ax_0$                               $\vphantom{\hat{r}_0 = b - A \hat{x}_0}$
\STATE $\vphantom{z_0 = F \hat{r}_0}$
\STATE $\rho_0 = r_0^\top r_0$                   $\vphantom{\hat{p}_0 = z_0}$
\STATE $p_0 = r_0$                               $\vphantom{\hat{p}_0 = z_0}$
\FOR{$\ell = 0, 1, \ldots$}
    \STATE $q_\ell = Ap_\ell$                               $\vphantom{\hat{q}_\ell = A \hat{p}_\ell}$
    \STATE $\alpha_\ell = \rho_\ell / (q_\ell^\top p_\ell)$       $\vphantom{\hat{\alpha}_\ell = \hat{\rho}_\ell / (\hat{q}_\ell^\top \hat{p}_\ell)}$
    \STATE $x_{\ell+1} = x_\ell + \alpha_\ell p_\ell$             $\vphantom{\hat{x}_{\ell+1} = \hat{x}_\ell + \hat{\alpha}_\ell \hat{p}_\ell}$
    \STATE $r_{\ell+1} = r_\ell - \alpha_\ell q_\ell$             $\vphantom{\hat{r}_{\ell+1} = \hat{r}_\ell - \hat{\alpha}_\ell \hat{q}_\ell}$                                           \STATE $\vphantom{z_{\ell+1} = F \hat{r}_{\ell+1}}$
    \STATE $\rho_{\ell+1} = r_{\ell+1}^\top r_{\ell+1}$   $\vphantom{\hat{\rho}_{\ell+1} = \hat{r}_{\ell+1}^\top z_{\ell+1}}$
    \STATE $\beta_{\ell+1} = \rho_{\ell+1} / \rho_\ell$        $\vphantom{\hat{\beta}_{\ell+1} = \hat{\rho}_{\ell+1} / \hat{\rho}_\ell}$    \STATE $p_{\ell+1} = r_{\ell+1} + \beta_{\ell+1} p_\ell$ $\vphantom{\hat{p}_{\ell+1} = z_{\ell+1} + \hat{\beta}_{\ell+1} \hat{p}_\ell}$
\ENDFOR
\end{algorithmic}
\end{algorithm}
\end{minipage}
\hfill
\begin{minipage}[t]{0.47\linewidth}
\begin{algorithm}[H]
\caption{PCG}
\label{Algo:PCG}
\begin{algorithmic}[1]
\STATE $\hat{r}_0 = b - A \hat{x}_0$ \quad\quad // $\hat{x}_0 = x_0$
\STATE $z_0 = F \hat{r}_0$
\STATE $\hat{\rho}_0 = \hat{r}_0^\top z_0$
\STATE $\hat{p}_0 = z_0$
\FOR{$\ell = 0, 1, \ldots$}
    \STATE $\hat{q}_\ell = A \hat{p}_\ell$
    \STATE $\hat{\alpha}_\ell = \hat{\rho}_\ell / (\hat{q}_\ell^\top \hat{p}_\ell)$
    \STATE $\hat{x}_{\ell+1} = \hat{x}_\ell + \hat{\alpha}_\ell \hat{p}_\ell$ 
    \STATE $\hat{r}_{\ell+1} = \hat{r}_\ell - \hat{\alpha}_\ell \hat{q}_\ell$
    \STATE $z_{\ell+1} = F \hat{r}_{\ell+1}$
    \STATE $\hat{\rho}_{\ell+1} = \hat{r}_{\ell+1}^\top z_{\ell+1}$
    \STATE $\hat{\beta}_{\ell+1} = \hat{\rho}_{\ell+1} / \hat{\rho}_\ell$
    \STATE $\hat{p}_{\ell+1} = z_{\ell+1} + \hat{\beta}_{\ell+1} \hat{p}_\ell$
\ENDFOR
\end{algorithmic}
\end{algorithm}
\end{minipage}

\subsection{Convergence properties of CG}
 
The approximation~\eqref{eq:projProcess} minimizes the error in energy norm:
\begin{equation}
\|x^\ast - x_\ell\|_{A}^2 = \min_{p\in \mathbb{P}_\ell(0)} \|p(A)(x^\ast - x_0)\|_{A}^2 = \min_{p\in \mathbb{P}_\ell(0)} \, \sum_{i=1}^n p(\lambda_i)^2 \frac{\eta_i^2 }{\lambda_i}, \label{eq:MiniProcess}
\end{equation}
where $\eta_i = s_i^\top r_0$ and $\mathbb{P}_\ell(0)$ is the set of polynomials of degree at most $\ell$ with value $1$ at zero \citep{YoussSaad}. Thus, at each iteration, before reaching the solution, CG solves a certain weighted polynomial approximation problem over the discrete set~$\{\lambda_1, \ldots, \lambda_n\}$.
Moreover, if $z_1^{(\ell)}, \ldots, z_\ell^{(\ell)}$ are the $\ell$ roots of the unique solution $p_\ell$ to~\eqref{eq:MiniProcess} \citep{gratton2014differentiating, VanDer},
\begin{equation}
\|x^\ast - x_\ell\|_{A}^2 = \sum_{i=1}^n p_\ell(\lambda_i)^2 \frac{\eta_i^2}{\lambda_i} = \sum_{i=1}^n \prod_{j=1}^\ell \left(1 - \frac{\lambda_i}{z_j^{(\ell)}}\right)^2 \frac{\eta_i^2}{\lambda_i}. \label{eq:ErrorRitz}
\end{equation}
The $z_j^{(\ell)}$ are the \emph{Ritz values} \citep{VanDer}. From~\eqref{eq:ErrorRitz}, if $z_j^{(\ell)}$ is close to a $\lambda_i$, we expect a significant reduction in the error in energy norm.
Based on the above, \citet{VanDer} explain the rate of convergence of CG in terms of the convergence of the Ritz values to eigenvalues of \(A\). 

The widely stated convergence bound  for CG
\begin{equation}\label{eq:overestimate}
    \frac{\|x^\ast - x_\ell\|_A}{\|x^\ast - x_0\|_A} \leq 2 \left(\frac{\sqrt{\kappa(A)} - 1}{\sqrt{\kappa(A)} + 1} \right)^\ell,
\end{equation}
where $\kappa(A) := \lambda_1 / \lambda_n$ is the condition number of \(A\). While~\eqref{eq:overestimate} provide the worst-case behavior of CG, the convergence properties may vary significantly from the worst case for a specific right hand side~\citep{beckermann2002superlinear}.
\subsection{Properties of a good preconditioner}\label{subsec:Properties of a good preconditioner}
In many practical applications, a preconditioner is essential for accelerating the convergence of CG \citep{Preconditionning1, wathen2015preconditioning}. Assume that a preconditioner $F = U U^\top \in \mathbb{R}^{n \times n}$ is available in factored form, where $U$ is non singular, and consider the system with split preconditioner
\begin{equation}
    U^\top A U y = U^\top b, \label{eq:SplitPrecSys}
\end{equation}
whose matrix is also SPD.
System~\eqref{eq:SplitPrecSys} can then be solved with CG. The latter updates estimate $y_{\ell}$ that can be used to recover $\hat{x}_\ell := U y_\ell$.
\Cref{Algo:PCG}, the PCG method, is equivalent to the procedure just described, but only involves products with \(F\) and does not assume knowledge of \(U\) \citep[p.532]{GoVa13}.
PCG updates $\hat{x}_\ell$ directly as an approximate solution in $\hat{x}_0 + U\mathcal{K}_{\ell}(U^\top A U , U^\top r_0)$, with $\hat{x}_0 = x_0$, as in~\eqref{eq:MiniProcess}, $\hat{x}_\ell$ minimizes the energy norm 
\begin{equation}
\lVert x^\ast - \hat{x}_\ell \rVert_{A}^2 = \min_{p \in \mathbb{P}_\ell(0)} \lVert U p (U^\top A U) U^{-1} \left( x^\ast - x_0\right) \rVert_{A}^2.\label{eq:error_energy}    
\end{equation}

Although there is no general method for building a good preconditioner \citep{Preconditionning1, wathen2015preconditioning}, leveraging the convergence properties of CG on~\eqref{eq:error_energy} often leads to the following criteria: (i) $F$ should approximate the inverse of $A$, (ii) $F$ should be cheap to apply, (iii) $\kappa(U^\top A U)$ should be smaller than $ \kappa(A)$, and (iv) $U^\top A U$ should have a more favorable distribution of eigenvalues than \(A\). 

\section{A scaled spectral preconditioner}\label{sec:A scaled spectral preconditioner}
We focus on the spectral preconditioners~\citep{Bergamaschi2020,SenSpectLMP,LMP} derived from the spectrum of $A$ that can be expressed as
\begin{equation}\label{eq:spectral_prec}
F = I_n + S D S^\top,    
\end{equation}
where 
$D = \diag(d_1, d_2, \ldots, d_n)$ is a diagonal matrix of rank $k$, \( 1 < k < n \), such that $I_n + D$ is SPD. Preconditioner $F$ can be expressed in factorized form $F = UU^\top$, with 
\begin{equation}\label{eq:spectral_prec_square}
U =  S (I_n + D)^{1/2} S^\top,    
\end{equation}
In what follows, we denote by $\hat{x}_{\ell}(F)$ the $\ell$-th PCG iterate computed with the preconditioner $F$ defined in~\eqref{eq:spectral_prec}. We also let $\mathbf{PCG}(F)$ denote the PCG algorithm when using $F$ as a preconditioner.
The next theorem provides an explicit expression for the energy norm of the error produced by $\mathbf{PCG}(F)$.
\begin{theoremE}
  \label{thm:GeneralError}
  Consider $\mathbf{PCG}(F)$ with $F$ defined in~\eqref{eq:spectral_prec}. 
  Assume that, at iteration $\ell$, the $\mathbf{PCG}(F)$ has not yet reached the solution. Then for any $j\leq \ell$,
\begin{equation}\label{eq:ErrorSpectralLMP}
    \|x^\ast - \hat{x}_{j}(F)\|_{A}^2 = \sum_{i=1}^{n} \frac{\eta_i^2}{\lambda_i} \hat{p}_j((d_i+1) \lambda_i)^2,
\end{equation}
where $\{\hat{p}_j\} = \arg\min_{p \in \mathbb{P}_{j}(0)} \sum\limits_{i=1}^n \frac{\eta_i^2}{\lambda_i} p((d_i+1) \lambda_i)^2$.
\end{theoremE}
\begin{proofEE}
Let $U$ be defined as~\eqref{eq:spectral_prec_square}. As in~\eqref{eq:error_energy},
    \begin{align*}
        \|x^\ast - \hat{x}_{j}(F)\|_{A}^2 &= \min_{p \in \mathbb{P}_{j}(0)} \lVert U p (U^\top A U) U^{-1} \left( x^\ast - x_0\right) \rVert_{A}^2\\
        &= \min_{p \in \mathbb{P}_{j}(0)} \lVert p (U^\top A U)\left( x^\ast - x_0\right) \rVert_{A}^2=\min_{p \in \mathbb{P}_{j}(0)} \sum_{i=1}^{n} \frac{\eta_i^2}{\lambda_i} p((d_i+1) \lambda_i)^2,
    \end{align*}
where we use the identity $U p(U^\top A U) U^{-1} = U U^{-1} p(U^\top A U)$, since both \( A \) and \( U \) are diagonalizable in the same eigenbasis.
\end{proofEE}

In the spectral LMP \citep{TshiGratWeavSart08} for data assimilation problems, the coefficients $\{d_i\}$ are chosen so that the $k$ largest eigenvalues $\{\lambda_i\}$ are mapped into an existing cluster, typically located at $1$, while the remaining eigenvalues remain unchanged. Here, we ask whether a better choice for $d_i$ can be made.

\subsection{An optimal choice for \boldmath{$D$}}\label{subsec:optimal_F}

We wish to find $D^\ast=\diag(d_1^\ast,\ldots,d_n^\ast)$ as a solution of 
\begin{equation}\label{eq:Prec_Opt}
    \begin{aligned}
    \min_{(d_1,\ldots,d_n) \in \R^n}\quad &\tfrac{1}{2} \|x^\ast - \hat{x}_{\ell}(F) \|_{A}^2\\
        \text{s.t.} \quad\quad  &F = I_n + S D  S^\top \quad \mbox{with}~D=\diag(d_1,\ldots,d_n),\\
        & I_n + D  \text{ is positive-definite}, \\
        & \text{rank}(D) \leq k.
    \end{aligned}
\end{equation}
Problem~\eqref{eq:Prec_Opt} defines the best low-rank update of the identity  matrix in the directions of the eigenvectors to minimize the error in energy norm at iteration $\ell$. 

In~\Cref{th:PrecOptimale}, we will show that the entries of a particular class of solutions can be constructed using \(k\) eigenvalues of \(A\) and a scaling factor \(\theta^\ast\), which is a root of a specific polynomial defined later. The choice of eigenvalues used in this construction depends on the properties of \(A\), \(b\), and \(\ell\).
To identify the appropriate \(k\) eigenvalues, we consider the set
\begin{equation*}
   \Pi_k = \{\pi_k \subset \{1,2,\ldots,n\} \mid |\pi_k| = k \}.
\end{equation*}
For any  \(\pi_k \in\Pi_k\), the indices in \(\pi_k\) determine the \(k\) nonzero 
diagonal elements of \(D\), and thus define a preconditioner \(F\). 
The complementary set of \(\pi_k\) will be denoted \(\bar{\pi}_k\). 

To prove~\Cref{th:PrecOptimale}, we first present a lemma on the existence and unicity of the polynomial minimizing,
\begin{equation}\label{poly_existance}\displaystyle \min_{p \in \mathbb{P}_{\ell}(0)}  \sum_{i \in \bar \pi_k} \frac{\eta_i^2}{\lambda_{i}} p(\lambda_{i})^2, 
\end{equation}
which is important for constructing the solution in~\Cref{th:PrecOptimale}.
\begin{lemmaE}\label{lm:unicity_p}
 Consider $\mathbf{PCG}(F)$ with $F$ feasible for~\eqref{eq:Prec_Opt}. 
  Assume that, at iteration $\ell$, the $\mathbf{PCG}(F)$ has not yet reached the solution.
 Then, for any $\pi_k \in \Pi_k$, there
exists a unique solution to~\eqref{poly_existance} which has only positive roots. 
\end{lemmaE}
\begin{proofEE}
    Let us define the SPD matrix $\Sigma := \diag((\lambda_i)_{i\in \bar\pi_k})$ and $\eta := [\eta_i]_{i\in\bar\pi_k}^\top$. Applying CG to the linear system 
  $\Sigma\;y = \eta$ ,
with the initial guess $y_0 = 0$, implicitly solves
\begin{equation}\label{eq:pltilde}
\arg \min_{p \in \mathbb{P}_{j}(0)} 
\sum_{i \in \bar \pi_k}
\frac{\eta_i^2}{\lambda_{i}} p(\lambda_{i})^2    
\end{equation}
at the $j$-th iteration, with $j \leq \widetilde{\ell}$  and $\widetilde{\ell}$ being the grade of $\eta$ with respect to $\Sigma$. Thus, to prove the existence and uniqueness of the polynomial minimizing~\eqref{poly_existance},
it suffices to show that $\widetilde{\ell}> \ell$, since CG produces the unique polynomial minimizing~\eqref{eq:pltilde}.

By contradiction, assume that $\widetilde{\ell} \leq \ell$. We first consider the case $\widetilde{\ell} \geq 1$; the particular case $\widetilde{\ell} = 0$ will be handled separately. Let $\{p_{\widetilde{\ell}}\} = \arg\min_{p\in\mathbb{P}_{\widetilde{\ell}}(0)} \sum_{i \in \bar{\pi}_k} 
\frac{\eta_i^2}{\lambda_i} \, p(\lambda_i)^2$,
and $\sigma$ be one of its positive roots (a Ritz value at iteration $\widetilde{\ell}$ when solving $\Sigma\;y=\eta$). Define a preconditioner $\widetilde{F}$ with $d_i = \sigma/\lambda_i -1$ for $i \in \pi_k$, and $d_i = 0$ for $i \in \bar{\pi}_k$.  Then,
\begin{align*}
\min_{p\in\mathbb{P}_{\widetilde{\ell}}(0)} \sum_{i\in \bar\pi_k} \frac{\eta_i^2}{\lambda_i} {p}(\lambda_i)^2
    &\leq \min_{p\in\mathbb{P}_{\widetilde{\ell}}(0)} \left [ \sum_{i\in \pi_k}\frac{\eta_i^2}{\lambda_i} {p}(\sigma)^2 
      + \sum_{i\in \bar\pi_k} \frac{\eta_i^2}{\lambda_i} {p}(\lambda_i)^2 \right ] \\
    & = \|x^\ast - \hat{x}_{\widetilde{\ell}}(\widetilde{F})\|_{A}^2\\
    &\leq \sum_{i\in \pi_k} \frac{\eta_i^2}{\lambda_i} {p_{\widetilde{\ell}}}(\sigma)^2 
       + \sum_{i\in \bar\pi_k} \frac{\eta_i^2}{\lambda_i} p_{\widetilde{\ell}}(\lambda_i)^2 
       = \min\limits_{p\in\mathbb{P}_{\widetilde{\ell}}(0)}  \sum_{i\in \bar\pi_k} \frac{\eta_i^2}{\lambda_i} {p}(\lambda_i)^2.
\end{align*}
As a result, $\|x^\ast - \hat{x}_{\widetilde{\ell}}(\widetilde{F})\|_{A}^2 
= \min\limits_{p\in\mathbb{P}_{\widetilde{\ell}}(0)}  \sum_{i\in \bar\pi_k}\frac{\eta_i^2}{\lambda_i} {p}(\lambda_i)^2 = 0$,
since CG applied to the linear system   $\Sigma\;y = \eta$ terminates at step $\widetilde{\ell}$. Since $\widetilde{\ell} \leq \ell$, the last equality contradicts the assumption that $\hat{x}_{{\ell}}(\widetilde{F})\neq x^\ast$. 

We now consider the case $\widetilde{\ell} = 0$, which occurs only if $\eta = 0$. 
Define the preconditioner $\widetilde{F}$ with $d_i = \sigma/\lambda_i -1$ for $i \in \pi_k$, and $d_i = 0$ for $i \in \bar{\pi}_k$ with $\sigma > 0$. 
From \Cref{thm:GeneralError}, and since $\eta_i=0$ for $i\in\bar\pi_k$,
\begin{equation*}
\begin{aligned}  
\|x^\ast - \hat{x}_{1}(\widetilde{F})\|_{A}^2 
    &= \min\limits_{p\in\mathbb{P}_1(0)}\sum_{i\in \pi_k} \frac{\eta_i^2}{\lambda_i} {p}(\sigma)^2
       + \sum_{i\in \bar\pi_k} \frac{\eta_i^2}{\lambda_i} {p}(\lambda_i)^2 \leq \sum_{i\in \pi_k}\frac{\eta_i^2}{\lambda_i} \hat{p}_1 (\sigma) = 0,
\end{aligned}    
\end{equation*}
where $\hat{p}_1(\lambda) = 1 - \frac{\lambda}{\sigma}$. This again contradicts the assumption that $\hat{x}_{1}(\widetilde{F})\neq x^\ast$.
Finally, we conclude that $\widetilde{\ell} > \ell$.

Since the roots of the polynomial minimizing~\eqref{poly_existance} correspond to Ritz values generated by CG applied to   $  \Sigma\;y = \eta$, all the roots are positive.
\end{proofEE}

In the rest of the paper, we  assume that $\ell\ge 1$ and that at iteration $\ell$,  $\mathbf{PCG}(F)$ has not yet reached the solution with $F$ feasible for~\eqref{eq:Prec_Opt}.

\begin{theoremE}\label{th:PrecOptimale}
Let $
    \displaystyle
\pi_k^{\ast}\in  \arg\min_{\pi_k \in\Pi_k} 
\left \{ 
\min_{p \in \mathbb{P}_{\ell}(0)} 
\sum_{i\in \bar \pi_k} 
\frac{\eta_i^2}{\lambda_{i}} p(\lambda_{i})^2 \right\}$, and   $\hat{p}_{\ell} \in \mathbb{P}_{\ell}(0)$, with a positive root $\theta^*
>0$ and
such that $
\displaystyle \{\hat{p}_{\ell}\} = \arg \min_{p \in \mathbb{P}_{\ell}(0)} 
\sum_{i \in \bar \pi^*_k}
\frac{\eta_i^2}{\lambda_{i}} p(\lambda_{i})^2. 
$

Then, $d^\ast := (d_1^{\ast},\ldots,d_n^{\ast})$, where
$
d_i^\ast :=
\begin{cases}
    \frac{\theta^\ast}{\lambda_i} - 1   , & \text{if } i \in \pi^{*}_k, \\
    0, & \text{if } i \in \bar\pi^{*}_k, 
\end{cases},
$ solves~\eqref{eq:Prec_Opt}.

In addition,
\begin{equation}\label{eq:erroroptimalpi}
    \|x^\ast - \hat{x}_{\ell} (F^\ast) \|_{A}^2 = \sum_{i\in \bar \pi_k^\ast} 
\frac{\eta_i^2}{\lambda_{i}} \hat{p}_{\ell}(\lambda_{i})^2, 
\end{equation}
where
$F^\ast:=I_n + S D^* S^\top$ with $D^*=\diag(d^*_1,\ldots,d^*_n)$.
\end{theoremE}

\begin{proofEE}
First, the existence of such a $\pi_k^\ast$ follows from the fact that the set $\Pi_k$ is finite and that, for each $\pi_k$, the inner minimization over $\mathbb{P}_\ell(0)$ is well defined.
 Let $F = I_n + S D S^\top \in \mathbb{R}^{n \times n}$ with $D=\text{diag}(d_1,\ldots,d_n)$ where $(d_1,\ldots,d_n)$ is
    feasible for~\eqref{eq:Prec_Opt}.  
Let 
$
\mathcal{Z} = \{i \in \{1,\ldots,n\}~|~ d_i = 0\}$ and $ \mathcal{N} = \{i \in \{1,\ldots,n\} ~|~d_i \neq 0\}
$. Then, using \Cref{thm:GeneralError},
\begin{equation}
\begin{aligned}
\|x^\ast - \hat{x}_{\ell}(F)\|_{A}^2 
&=\min_{p \in \mathbb{P}_{\ell}(0)} \sum_{i\in \mathcal{N}} \frac{\eta_i^2}{\lambda_i} p((d_i+1) \lambda_i)^2 + \sum_{i\in \mathcal{Z}} \frac{\eta_i^2}{\lambda_i} p(\lambda_i)^2  \\ 
&\geq \min_{p \in \mathbb{P}_{\ell}(0)} \sum_{i\in \mathcal{Z}} \frac{\eta_i^2}{\lambda_i} p(\lambda_i)^2\label{eq:lastineq}.
\end{aligned}
\end{equation}
Since \( D \) has rank at most \( k \),  $|\mathcal{Z}|\geq n -k$.

Let $\mathcal{\bar \pi}_k \subseteq \mathcal{Z}$ such that $|\mathcal{\bar \pi}_k| = n-k$.  Then, using~\eqref{eq:lastineq}, the optimality of set $\pi_k^\ast$,  and the fact that $\hat{p}_{\ell}(\theta^\ast)=0$, we get
\begin{eqnarray*}
    \|x^\ast - \hat{x}_{\ell}(F)\|_{A}^2 
\geq \min\limits_{p\in\mathbb{P}_{\ell}(0)}\sum_{i\in \mathcal{\bar \pi}_k} \frac{\eta_i^2}{\lambda_i} p(\lambda_i)^2 &  \geq &\sum_{i \in \bar \pi_k^\ast}
    \frac{\eta_{i}^2}{\lambda_{i}} \hat{p}_{\ell}(\lambda_{i})^2\\
&
      =& \sum_{i \in \pi^{*}_k}
    \frac{\eta_{i}^2}{\lambda_{i}} \hat{p}_{\ell}(\theta^\ast)^2 + \sum_{i \in \bar \pi_k^\ast} 
    \frac{\eta_{i}^2}{\lambda_{i}} \hat{p}_{\ell}(\lambda_{i})^2  \\
    &\geq &\min\limits_{p\in\mathbb{P}_{\ell}(0)}\sum_{i \in \pi^{*}_k}
    \frac{\eta_{i}^2}{\lambda_{i}} p(\theta^\ast)^2 + \sum_{i \in \bar \pi_k^\ast} 
    \frac{\eta_{i}^2}{\lambda_{i}} p(\lambda_{i})^2 \\
 &
      =& \min\limits_{p\in\mathbb{P}_{\ell}(0)}\sum_{i=1}^{n} \frac{\eta_i^2}{\lambda_i} p((d_i^\ast+1) \lambda_i)^2 \\
 &      
      =&   \|x^\ast - \hat{x}_{\ell}(F^\ast)\|_{A}^2. \hspace*{2em} (\text{\small{by \Cref{thm:GeneralError}}})
\end{eqnarray*}
Hence, 
\begin{equation}\label{eq:lowerboundopti}
\begin{aligned}
\|x^\ast - \hat{x}_{\ell}(F)\|_{A}^2 & \ge      \sum_{i \in \bar \pi_k^\ast}
    \frac{\eta_{i}^2}{\lambda_{i}} \hat{p}_{\ell}(\lambda_{i})^2\ge \|x^\ast - \hat{x}_{\ell}(F^\ast)\|_{A}^2.
\end{aligned}    
\end{equation}
Since~\eqref{eq:lowerboundopti} holds for any feasible $F$, ${d^*}$ solves~\eqref{eq:Prec_Opt}.
Moreover, as~\eqref{eq:lowerboundopti} also holds for $F^*$, $\|x^\ast - \hat{x}_{\ell}(F^\ast)\|_{A}^2 = \sum_{i \in \bar \pi_k^\ast}
    \frac{\eta_{i}^2}{\lambda_{i}} \hat{p}_{\ell}(\lambda_{i})^2$.
\end{proofEE}


The following corollary provides an explicit expression for $F^\ast$ together with a square root decomposition.
\begin{corollary}
   Solution of~\eqref{eq:Prec_Opt} can be expressed as
\begin{equation}
\label{eq:U_perm}
    F^\ast 
    := F({\theta^\ast, \pi^{*}_k})= I_n + \sum\limits_{i\in \pi^{*}_k }\left(\frac{\theta^\ast}{\lambda_i}-1\right)s_{i}s_{i}^\top= U^\ast {U^{\ast}}^\top,
\end{equation}
where $U^\ast= {U^{\ast}}^\top := I_n + \sum\limits_{i\in \pi^{*}_k }\left(\sqrt{\frac{\theta^\ast}{\lambda_i}}-1\right)s_{i}s_{i}^\top $, and $\pi^{*}_k$ and $\theta^\ast$ are defined in \Cref{th:PrecOptimale}. 
\end{corollary}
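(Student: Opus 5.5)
The corollary follows directly from \Cref{th:PrecOptimale} by substituting the optimal diagonal $D^\ast$ into the definition~\eqref{eq:spectral_prec} and expanding in the eigenbasis. The plan has three steps.

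\emph{Expansion of $F^\ast$.} Write $S D^\ast S^\top = \sum_{i=1}^n d_i^\ast s_i s_i^\top$. By \Cref{th:PrecOptimale}, $d_i^\ast = 0$ for $i\in\bar\pi_k^\ast$, so only the indices in $\pi_k^\ast$ remain. There $d_i^\ast=\theta^\ast/\lambda_i-1$, which gives the first equality in~\eqref{eq:U_perm}. Since $F^\ast$ is determined entirely by $\theta^\ast$ and $\pi_k^\ast$, the notation $F(\theta^\ast,\pi_k^\ast)$ is justified.

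\emph{Factorization.} Use~\eqref{eq:spectral_prec_square}, namely $U^\ast = S(I_n+D^\ast)^{1/2}S^\top$. This is well defined because $I_n+D^\ast$ is positive definite: its diagonal entries are $1$ on $\bar\pi_k^\ast$ and $\theta^\ast/\lambda_i>0$ on $\pi_k^\ast$, using $\theta^\ast>0$ and $\lambda_i>0$. The diagonal entries of $(I_n+D^\ast)^{1/2}$ are therefore $1$ and $\sqrt{\theta^\ast/\lambda_i}$, respectively. Writing $S(I_n+D^\ast)^{1/2}S^\top = I_n + S\bigl((I_n+D^\ast)^{1/2}-I_n\bigr)S^\top$ and using $SS^\top=I_n$ gives the stated formula for $U^\ast$. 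Symmetry of $U^\ast$ is immediate, since it is a sum of $I_n$ and symmetric rank-one terms.

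\emph{Verification of $F^\ast = U^\ast U^{\ast\top}$.} Compute $U^\ast U^{\ast\top} = S(I_n+D^\ast)^{1/2}S^\top S(I_n+D^\ast)^{1/2}S^\top = S(I_n+D^\ast)S^\top = F^\ast$ by orthogonality of $S$. Alternatively, expand $(I_n+\sum_{i\in\pi_k^\ast}(c_i-1)s_is_i^\top)^2$ with $c_i=\sqrt{\theta^\ast/\lambda_i}$. By orthonormality the cross terms vanish, and each $s_is_i^\top$ acquires the coefficient $2(c_i-1)+(c_i-1)^2 = c_i^2-1 = \theta^\ast/\lambda_i-1$.

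The only point needing care is positivity of $\theta^\ast$, which guarantees the square roots are real. \Cref{th:PrecOptimale} assumes this explicitly, and it is also consistent with \Cref{lm:unicity_p}, which shows all roots of $\hat p_\ell$ are positive. Everything else is routine algebra.
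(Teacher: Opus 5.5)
Your proposal is correct and follows the same route the paper implicitly takes; the corollary is stated without a separate proof, as an immediate consequence of \Cref{th:PrecOptimale}. That route is to substitute $D^\ast$ into \eqref{eq:spectral_prec} and take $U^\ast = S(I_n+D^\ast)^{1/2}S^\top$ as in \eqref{eq:spectral_prec_square}, with $\theta^\ast>0$ guaranteeing that $I_n+D^\ast$ is positive definite and the square roots are real.
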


Note that \( F^\ast \) maps eigenvalues \( \{\lambda_i\}_{i\in\pi^{*}_k} \) at $\theta^\ast$, while leaving the rest of the spectrum unchanged. Specifically,  
\[
F^\ast A 
= \theta^\ast \sum_{i \in \pi_k^\ast} s_i s_i^\top
+ \sum_{i \in \bar\pi_k^\ast} \lambda_i s_i s_i^\top.
\]
Hence, the spectrum is $\{\theta^\ast \text{ (with multiplicity } k)\} \cup \{\lambda_i \mid i \in \bar\pi_k^\ast\}$.
The following corollary states that {$\mathbf{PCG}(F^\ast)$}
guarantees a reduction in the energy norm of the error compared to the unpreconditioned case. In fact, since $D=0$ is feasible for~\eqref{eq:Prec_Opt}, the {$\mathbf{PCG}(I_n)$} iterates coincide with those of CG. Hence, by the optimality of $F^\ast$ for~\eqref{eq:Prec_Opt}, the result follows directly.
\begin{corollary}
    {$\mathbf{PCG}(F^\ast)$} leads to an improved reduction in the energy norm of the error compared to the unpreconditioned case: 
\begin{equation}\label{eq:BetterConvergence}
    \|x^\ast - \hat{x}_{\ell}(F^\ast) \|_{A}^2 \leq \|x^\ast - x_{\ell} \|_{A}^2.
\end{equation}
\end{corollary}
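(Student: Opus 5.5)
The argument is a direct comparison inside the optimization problem~\eqref{eq:Prec_Opt}. I will check that the unpreconditioned CG run is a feasible instance of that problem and then appeal to the optimality of $F^\ast$ from \Cref{th:PrecOptimale}.

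\emph{Step 1: feasibility of $D=0$.} Taking $D=0$ gives $F=I_n$. Then $I_n+D=I_n$ is positive-definite and $\operatorname{rank}(D)=0\le k$. So $(0,\ldots,0)$ satisfies every constraint of~\eqref{eq:Prec_Opt}.

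\emph{Step 2: $\mathbf{PCG}(I_n)$ coincides with CG.} In \Cref{Algo:PCG} with $F=I_n$ we have $z_\ell=\hat r_\ell$ and $\hat\rho_\ell=\hat r_\ell^\top\hat r_\ell$, and $\hat x_0=x_0$. Line by line, the recurrences are then identical to those of \Cref{Algo:CG}. By induction $\hat x_\ell(I_n)=x_\ell$ for all $\ell$.

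As a cross-check, \Cref{thm:GeneralError} with $d_i=0$ gives exactly the CG error expression~\eqref{eq:MiniProcess}. This identification is valid as long as CG has not terminated. If CG had already reached $x^\ast$ at some step $j\le\ell$, then $x_\ell=x^\ast$ and the right-hand side of~\eqref{eq:BetterConvergence} is zero. In that case I would instead note that the left-hand side is also zero, via the bound~\eqref{eq:lowerboundopti} applied with $F=I_n$.

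\emph{Step 3: conclude by optimality.} By \Cref{th:PrecOptimale}, $d^\ast$ minimizes $\|x^\ast-\hat x_\ell(F)\|_A^2$ over all feasible $F$. Concretely, \eqref{eq:lowerboundopti} gives
\[
\|x^\ast-\hat x_\ell(F)\|_A^2\ \ge\ \|x^\ast-\hat x_\ell(F^\ast)\|_A^2
\]
for every feasible $F$. Taking $F=I_n$ and using Step 2 yields~\eqref{eq:BetterConvergence}.

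\emph{Main obstacle.} The only delicate point is the standing assumption that $\mathbf{PCG}(F)$ has not converged by step $\ell$, which \Cref{thm:GeneralError} needs. I would handle it by applying the chain of inequalities in the proof of \Cref{th:PrecOptimale} directly to $F=I_n$. That chain only uses the minimization characterization~\eqref{eq:MiniProcess}, and this characterization holds for CG at every step with the convention that the error is zero after termination. So the inequality holds in all cases.
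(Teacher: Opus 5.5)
Your proposal is correct and follows the paper's argument: $D=0$ is feasible for \eqref{eq:Prec_Opt}, $\mathbf{PCG}(I_n)$ reproduces the CG iterates, and the optimality of $F^\ast$ from \Cref{th:PrecOptimale} then gives \eqref{eq:BetterConvergence}. Your separate treatment of early CG termination is not needed. The standing assumption stated just before \Cref{th:PrecOptimale} applies to every feasible $F$, including $F=I_n$, so that case is excluded. In that degenerate case the uniqueness of $\hat p_\ell$, and hence the definition of $F^\ast$, could fail anyway, so appealing to \eqref{eq:lowerboundopti} there would not be justified.
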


Inequality~\eqref{eq:BetterConvergence} is particularly relevant for applications in which the reduction of the error in the energy norm is a primary objective and PCG is terminated prior to convergence.

The optimal set \(\pi_k^{\ast}\) and the value of \(\theta^{\ast}\) depend on the distribution of the eigenvalues \(\{\lambda_i\}\), the components of the initial residual \(\{\eta_i\}\), and \(\ell\).
Thus, computing \(\pi_k^{\ast}\) and \(\theta^{\ast}\) can be computationally infeasible in practical applications.
In the next section, we propose a practical choice of \(\pi_k^{\ast}\) obtained by minimizing the condition number; a closely related idea appears in \citep{agullo2018low} in the context of hierarchical matrices.
Practical strategies for selecting the scaling parameter, as alternatives to \(\theta^{\ast}\), are also proposed.

\subsection{A practical choice for $\pi_k^\ast$}\label{subsec:pi_k}
We first derive an upper bound on $\|x^\ast - \hat{x}_{\ell}(F^\ast) \|_{A}$ based on the condition number.
\begin{lemmaE}\label{lemma:pi_k^a}
Let $\kappa_{\bar \pi_k}:= \max \limits_{i\in \bar \pi_k}\lambda_{i}/\min \limits_{i\in\bar \pi_k}\lambda_{i}$ be defined for any $\pi_k \in \Pi_k$. Then,
\begin{align*}
\|x^\ast - \hat{x}_{\ell}(F^\ast) \|_{A} 
&\leq  
2 \left(\frac{\sqrt{\kappa_{\bar\pi_k^{a}}} - 1}{\sqrt{\kappa_{\bar\pi_k^{a}}} + 1} \right)^{\ell} 
\|x^\ast - x_0\|_{A},
\end{align*} 
where $\pi_k^{a} \in \arg\min\limits_{{\pi_k}\in\Pi_k}\kappa_{\bar \pi_k}$.
\end{lemmaE}
\begin{proofEE}
    Let $\pi_k \in \Pi_k$, then by \Cref{th:PrecOptimale}, we get
 \begin{align*}
    \|x^\ast - \hat{x}_{\ell}(F^\ast) \|_{A}^2  
&\leq \min_{p \in \mathbb{P}_{\ell}(0)} 
\sum_{i \in \bar \pi_k} 
\frac{\eta_{i}^2}{\lambda_{i}} p(\lambda_{i})^2\\
&\leq \min_{p \in \mathbb{P}_{\ell}(0)} \max_{i\in\bar \pi_k} p(\lambda_{i})^2 {\sum_{i\in\bar \pi_k} 
\frac{\eta_{i}^2}{\lambda_{i}}}\\
&\leq \min_{p \in \mathbb{P}_{\ell}(0)} \max_{i\in \bar \pi_k} p(\lambda_{i})^2 {\sum_{i=1}^{n} 
\frac{\eta_{i}^2}{\lambda_{i}}}\\
&\leq 4 \left(\frac{\sqrt{\kappa_{{\bar \pi_k}}} - 1}{\sqrt{\kappa_{{\bar \pi_k}}} + 1} \right)^{2\ell} {\sum_{i=1}^{n} 
\frac{\eta_{i}^2}{\lambda_{i}}}= 4 \left(\frac{\sqrt{\kappa_{\bar \pi_k}} - 1}{\sqrt{\kappa_{\bar \pi_k}} + 1} \right)^{2\ell} 
\|x^\ast - x_0\|_{A}^2,
\end{align*}   
where $ \kappa_{\bar \pi_k} := \max \limits_{i\in \bar \pi_k}\lambda_{i} / \min \limits_{i\in\bar \pi_k}\lambda_{i}$ defines the condition number 
associated with the set of eigenvalues, $(\lambda_i)_{i \in \bar \pi_k}$, which are not used in constructing the preconditioner.

Since, $\kappa \mapsto \left(\frac{\sqrt{\kappa}-1}{\sqrt{\kappa}+1}\right)^{2\ell}$ is increasing on $[1,+\infty[$, thus
\begin{align*}
\|x^\ast - \hat{x}_{\ell}(F^\ast) \|_{A}^2
&\leq  
4 \left(\frac{\sqrt{\kappa_{\bar\pi_k^{a}}} - 1}{\sqrt{\kappa_{\bar\pi_k^{a}}} + 1} \right)^{2\ell} 
\|x^\ast - x_0\|_{A}^2,
\end{align*}
where $\pi_k^{a} \in \arg\min\limits_{{\pi_k}\in\Pi_k}\kappa_{\bar \pi_k}$.
\end{proofEE}

The next proposition 
gives a characterization of $\pi_k^{a}$.
\begin{propositionE}
\label{co:condequiv}
Let  $\mathcal{J} = \arg\min\limits_{1 \leq j \leq k+1} \lambda_j/\lambda_{n-k+j-1}$, $j_0 \in \mathcal{J}$,  
and $\pi_k \in\Pi_k$ such that $\bar\pi_k = \{j_0, j_0+1, \ldots,j_0 +n-k-1\}$.
Then, $\pi_k \in \arg\min\limits_{\pi_k' \in\Pi_k} \kappa_{\bar\pi_k'}$.
\end{propositionE}
\begin{proofEE}
    Let \(\pi_k' \in\Pi_k\) and set \(j_0'= {\displaystyle \min_{i \in {\bar\pi}_{k}'}}~\{i\}\) and \(\bar{j}_0' ={\max\limits_{i \in \bar {\pi}_{k}'}} \{i\}\).
Using the fact that $\bar \pi_{k}^{'} \subset \{1,\ldots,n\}$ with elements ranging between \( j_0' \) and \( \bar{j}_0'\), we deduce that  \(\bar \pi_{k}' \subseteq \{j_0',j_0'+1,\ldots, \bar{j}_0'\} \).
Since $|\bar\pi_{k}'| = n-k$, we get
\begin{equation*}\label{eq:card}
    \bar{j}_0' \geq j_0' + n - k - 1 \quad \text{and} \quad j_0' \leq k+1.
\end{equation*}
Since $(\lambda_i)_{1\leq i \leq n}$ are given in decreasing order, 
$\kappa_{\bar\pi_k'} = \frac{\max_{i\in {\bar  \pi'}_k}\lambda_{i}}{\min_{i\in{\bar  \pi'}_k}\lambda_{i}}=\frac{\lambda_{j_0'}}{\lambda_{\bar{j}_0'}} \ge \frac{\lambda_{j_0'}}{\lambda_{j_0' + n - k - 1}}$.

Consider $\pi_k \in\Pi_k$ such that 
    $\bar\pi_k = \{j_0, j_0+1, \ldots,j_0 +n-k-1\}$.
Then, for all $\pi_k' \in\Pi_k$, we get
\[
\kappa_{\bar\pi_k'} \ge \frac{\lambda_{j_0'}}{\lambda_{j_0' + n - k - 1}} \ge \frac{\lambda_{j_0}}{\lambda_{j_0 + n - k - 1}} = \kappa_{\bar\pi_k}, 
\]
i.e., $\pi_{k} \in \arg\min\limits_{\pi_k' \in\Pi_k} \kappa_{\bar\pi_k'}$. 
\end{proofEE}

\Cref{co:condequiv} shows that once an index \( j_0 \in \mathcal{J} \) has been identified, an optimal \( \pi_k^{a} \) can be constructed simply by ensuring that the smallest and largest values of \( \bar \pi_k^{a} \) correspond to \( j_0 \) and \(n-k-1+j_0\), respectively. 
As a result, the preconditioner can be constructed by using \( \pi_k^{a} \) as follows
\begin{equation}
\label{eq:Ftheta}
F_{\theta}  := F(\theta, \pi_{k}^a)
\end{equation}
where $\theta > 0$, and strategies for selecting it will be presented later.

Let $j_0 \in \mathcal{J}$ such that $\bar\pi_k^a = \{j_0,j_0 + 1, \ldots,n-k+j_0-1\}$. The value of \( j_0 \) determines which part of the eigenspectrum is used in constructing the preconditioner. There are three possible cases:
\begin{enumerate}[label=\textit{Case \arabic*.}]
\item Using the largest \(k\) eigenvalues: \( j_0 = k+1 \) and \( \pi_{k}^{a} = \{1, 2, \ldots, k\} \).
\item Using the smallest \(k\) ones: \( j_0 = 1 \) and  \( \pi_{k}^{a} = \{n-k+1, n-k+2, \ldots, n\} \).
\item Using a mixture of smaller and larger ones: \( 1 < j_0 < k+1 \) and \newline \( \pi_{k}^{a} = \{1, \ldots, j_0 - 1\} \cup \{n-k + j_0, \ldots, n\} \).
\end{enumerate}


Now that $\pi_k^a$ has been set according to the application context (\textit{Cases 1, 2}, or \textit{3}), we provide a practical estimate of $\theta$ in the next section. In particular, we present four strategies for selecting $\theta$.

\section{On the choice of the scaling parameter $\theta$}\label{sec:Choices of the scalling}
\subsection{\boldmath{ \texorpdfstring{$\theta$}{theta}  } as the mid-range between \texorpdfstring{$\lambda_{j_0}$}{lambdak} and \texorpdfstring{$\lambda_{n-k+j_0 -1}$}{lambdan}}
\label{SubSec:median}
Our goal is to select $\theta$ such that the resulting {$\mathbf{PCG}(F_\theta)$} iterates yield an error comparable {to~\eqref{poly_existance} with $\pi_k = \pi_k^a$.} 
We begin by examining the connection with deflation methods \citep{DeflatedCG,DeflatedCG2}.
The deflation method, when used with the deflation subspace constructed from the eigenvectors ${(s_i})_{i\in\pi_k^a}$    corresponding to the eigenvalues 
${(\lambda_i})_{i\in\pi_k^a}$, generates iterates 
\begin{equation}\label{eq:x_l^D}
  x_\ell^{\text{D}} = \sum\limits_{i\in\pi_k^a} \frac{1}{\lambda_i}s_i s_i^\top b + P z_\ell,
\end{equation}
where $P = I_n -\sum_{i\in\pi_k^a} s_i s_i^\top$ is the orthogonal projection onto $\text{span}\{s_i \mid i\in\bar\pi_k^a\}$ and $z_\ell$ designates the iterate generated by CG when solving the  projected system~(see \Cref{prop:PA=AP})
\begin{equation}\label{eq:projected_sys}
      P  A \; z = P b 
\end{equation}
starting from $z_0 = x_0$. 


The following theorem provides the polynomial expression for $\left\| x^\ast - x_\ell^{\text{D}}\right\|_{A}^2$.

\begin{theoremE}\label{th:xast_xdef}
The energy norm of the error for $x_\ell^D$ defined in~\eqref{eq:x_l^D} is given by
    \begin{align}\label{eq:DefPoly}
    \left\| x^\ast - x_\ell^{\text{D}}\right\|_{A}^2
    & = 
    \min_{p\in \mathbb{P}_{\ell}(0)} 
    \sum_{i\in \bar\pi_k^a} \frac{\eta_i^2}{\lambda_i}{p}(\lambda_i)^2.       
\end{align}
\end{theoremE}
\begin{proofEE}
    The exact solution of $A\;x =b$ can be written as $
 x^\ast  = \sum_{i=1}^n \frac{1}{\lambda_i} s_i s_i^\top b.
$
From~\eqref{eq:x_l^D} 
\begin{equation}\label{eq:error-decomp}
  x^\ast - x_\ell^{\mathrm D}
  = \sum_{i\in \bar\pi_k^a} \frac{1}{\lambda_i} s_i s_i^\top b - P z_\ell.
\end{equation}
By \Cref{prop:reduced_system}, $z_\ell= \sum\limits_{i\in\pi_k^a} s_i s_i^\top x_0 + S_{\bar\pi_k^a} \hat{y}_\ell.$
Since $P s_i=0$ for $i\in\pi_k^a$ and $P s_i=s_i$ for $i\in\bar\pi_k^a$, we obtain $P z_\ell = S_{\bar\pi_k^a}\, \hat y_\ell$.
Inserting into \eqref{eq:error-decomp},
\[
  x^\ast - x_\ell^{\mathrm D}
  = S_{\bar\pi_k^a} \big(\Lambda_{\bar\pi_k^a}^{-1} S_{\bar\pi_{k}^{a}}^\top b - \hat y_\ell \big).
\]
Therefore, using $S_{\bar\pi_k^a}^\top A S_{\bar\pi_k^a} = \Lambda_{\bar\pi_k^a}$, we obtain, $\|x^\ast - x_\ell^{\mathrm D}\|_A^2
  = \|\Lambda_{\bar\pi_k^a}^{-1} S_{\bar\pi_k^a}^\top b - \hat y_\ell\|_{\Lambda_{\bar\pi_k^a}}^2$.

CG applied to the reduced system,
$\Lambda_{\bar\pi_k^a} \hat y = S_{\bar\pi_k^a}^\top b$, with starting vector 
$\hat y_0 = S_{\bar\pi_k^a}^\top x_0$ satisfies
\begin{align*}
     \|\Lambda_{\bar\pi_k^a}^{-1} S_{\bar\pi_k}^\top b - \hat y_\ell\|_{\Lambda_{\bar\pi_k^a}}^2
  &=  \min_{p\in\mathbb{P}_\ell(0)}
  \big\| p(\Lambda_{\bar\pi_k^a}) (\Lambda_{\bar\pi_k^a}^{-1}S_{\bar\pi_k^a}^\top b - \hat y_0)\big\|_{\Lambda_{\bar\pi_k^a}}^2\\
  &=\min_{p\in\mathbb{P}_\ell(0)}
  \big\| p(\Lambda_{\bar\pi_k^a}) \Lambda_{\bar\pi_k^a}^{-1}S_{\bar\pi_k^a}^\top ( b - S_{\bar\pi_k^a}  \Lambda_{\bar\pi_k^a} S_{\bar\pi_k^a}^\top x_0)\big\|_{\Lambda_{\bar\pi_k^a}}^2\\
  & = \min_{p\in\mathbb{P}_\ell(0)} \sum_{i\in \bar\pi_k^a}
  \frac{\eta_i^2}{\lambda_i} \, p(\lambda_i)^2.
  \tag*{\qed}
\end{align*}
\end{proofEE}

Let $\{p_\ell^{\text{D}}\} := \arg  \min_{p\in \mathbb{P}_{\ell}(0)} 
    \sum\limits_{i\in \bar\pi_k^a} \frac{\eta_i^2}{\lambda_i}{p}(\lambda_i)^2$ denote the polynomial that attains~\eqref{eq:DefPoly}.
The following theorem presents the main result of this section.
\begin{theoremE}\label{theorem:midrange}
Let $F_\theta$ be defined as~\eqref{eq:Ftheta} with $\theta >0$. Let $x_\ell^{\text{D}}$ be given by~\eqref{eq:x_l^D}. Then,
\begin{equation}\label{eq:upperboundmed}
 \left\| x^\ast - x_{\ell}^{\text{D}}\right\|_{A} \leq \left\| x^\ast - \hat{x}_{\ell}(F_\theta)\right\|_{A} \leq  \frac{\alpha(\theta)}{\theta} \left\| x^\ast - x_{\ell-1}^{\text{D}}\right\|_{A},
\end{equation}
 with 
$\alpha(\theta) = \max\left(|\lambda_{j_0} - \theta|,| \theta - \lambda_{{n-k+j_0 -1}} |\right).$
\end{theoremE}
 
\begin{proofEE}
        Let us start by proving the first inequality. Using expression~\eqref{eq:ErrorSpectralLMP} together with the definition of $F_{\theta}$, we obtain
\begin{align*}
    \left\| x^\ast - \hat{x}_{\ell}(F_\theta)\right\|_{A}^2 
    & = \min\limits_{p\in\mathbb{P}_{\ell}(0)}\sum_{i\in \pi_k^a} \frac{\eta_i^2}{\lambda_i} p(\theta)^2 + \sum_{i \in \bar\pi_k^a} \frac{\eta_i^2}{\lambda_i} p(\lambda_i)^2\\
    & \geq \min_{p\in \mathbb{P}_{\ell}(0)} 
          \sum\limits_{i \in \bar\pi_k^a} \frac{\eta_i^2}{\lambda_i}{p}(\lambda_i)^2  
          = \left\| x^\ast - x_{\ell}^{\text{D}}\right\|_{A}^2,
\end{align*}    
where the last equality follows from \Cref{th:xast_xdef}.
To prove the second equality, we define $\widetilde{p}(\lambda) = \left(1 - \frac{\lambda}{\theta}\right) p_{\ell-1}^{\text{D}}(\lambda) \in \mathbb{P}_{\ell}(0)$, where $\{p_{\ell-1}^{\text{D}}\} = \arg  \min\limits_{p\in \mathbb{P}_{\ell-1}(0)} 
    \sum\limits_{i\in \bar\pi_k^a} \frac{\eta_i^2}{\lambda_i}{p}(\lambda_i)^2$. By construction, $\widetilde{p}(\theta)=0$. Then,
\begin{align*}
    \left\| x^\ast - \hat{x}_{\ell}(F_\theta)\right\|_{A}^2 
    & = \min\limits_{p\in\mathbb{P}_{\ell}(0)}\sum_{i \in \pi_k^a} \frac{\eta_i^2}{\lambda_i} p(\theta)^2 + \sum_{i \in \bar\pi_k^a} \frac{\eta_i^2}{\lambda_i} p(\lambda_i)^2\\
    & \leq  \sum_{i \in \pi_k^a} \frac{\eta_i^2}{\lambda_i} \widetilde{p}(\theta)^2 + \sum_{i \in \bar\pi_k^a} \frac{\eta_i^2}{\lambda_i} \widetilde{p}(\lambda_i)^2= \sum_{i \in \bar\pi_k^a} \frac{\eta_i^2}{\lambda_i} p_{\ell-1}^{\text{D}}(\lambda_i)^2\left(1 - \frac{\lambda_i}{\theta}\right)^2\\
    & \leq \max_{i \in \bar\pi_k^a}\left(1 - \frac{\lambda_i}{\theta}\right)^2 \left\| x^\ast - x_{\ell-1}^{\text{D}}\right\|_{A}^2 = \frac{\alpha(\theta)^2}{\theta^2} \left\|  x^\ast- x_{\ell-1}^{\text{D}}\right\|_{A}^2.
\end{align*}  
Here we have used~\eqref{eq:DefPoly}, 
as well as the identity $\alpha(\theta) / \theta = \max_{i \in \bar\pi_k^a}(1 - \lambda_i / \theta)$, which follows from the ordering of 
the eigenvalues. 
\end{proofEE}

Note that  choosing $\theta>0$ such that $\alpha(\theta)/\theta \geq 1$  in~\eqref{eq:upperboundmed} would give a pessimistic upper bound. 
For a better bound, we select $\theta>0$ such that $\alpha(\theta)/\theta < 1$, which is equivalent to imposing $\theta > \lambda_{j_0}/2$.
The value of $\theta$ that minimizes $\alpha(\theta)/\theta$ is  $\theta_{\mathrm{m}} = (\lambda_{j_0}+\lambda_{n-k+j_0-1})/2$, for which $\alpha(\theta_{\mathrm{m}})/\theta_{\mathrm{m}} = (\lambda_{j_0} - \lambda_{n-k+j_0 -1})/(\lambda_{j_0} + \lambda_{n-k+j_0 -1})< 1$.

In practice, we do not have access to \(\lambda_{j_0}\) and \(\lambda_{n-k+j_0-1}\), so we propose practical choices of \(\theta\) such that $\alpha(\theta)/\theta < 1$, depending on which eigenvalues are used to build the preconditioner:
\begin{enumerate}[label=\textit{Case \arabic*.}]
\item Using the largest eigenvalues, i.e., \(j_0 = k+1\). If the smallest eigenvalue can be estimated. Then, practical choices are \(\theta = \bigl(\lambda_{k}+\lambda_{n}\bigr)/2\) or \(\theta = \lambda_k\).
\item Using the smallest eigenvalues, i.e., \(j_0 = 1\). If the largest eigenvalue can be estimated. Then, practical choices are \(\theta = \bigl(\lambda_{1}+\lambda_{n-k+1}\bigr)/2\) or \(\theta = \lambda_{1}\).
\item Using a  mixture of largest and smallest eigenvalues, i.e., \(1 < j_0 < k+1\). Then, two practical choices are \(\theta = \bigl(\lambda_{j_0-1}+\lambda_{\,n-k+j_0}\bigr)/2\) or \(\theta = \lambda_{j_0-1}\).
\end{enumerate}

\subsection{\boldmath{\texorpdfstring{$\theta$}{theta}} with respect to the initial iteration}\label{SubSec:optimum}

In this section, we investigate the choice of $\theta$ as the root of the polynomial $p_1^{D}$ given by
\begin{equation}\label{eq:p_1}
\{p_1^{D}\} = \arg\min\limits_{p\in \mathbb{P}_1(0)} 
\sum\limits_{i \in \bar\pi_k^a}\frac{\eta_i^2}{\lambda_i} p(\lambda_i)^2.
\end{equation}

\begin{theoremE}\label{th:ThetaOpt}
Let $\theta_1$ be the root of $p_1^{D}$. 
Then, $\|x^\ast - \hat{x}_1(F_{\theta_1})\|_A^2 = \|x^\ast - x_1^{D}\|_{A}^2$.
\end{theoremE}
\begin{proofEE}
    By using the first inequality from \Cref{theorem:midrange} for the particular choice $\ell=1$,
\begin{align*}
    \|x^\ast - x_1^{D}\|_{A}^2 
    &\leq \|x^\ast - \hat{x}_1(F_{\theta_1})\|_{A}^2\\
    &= \min\limits_{p\in \mathbb{P}_1(0)} 
       \sum\limits_{i \in \pi_k^a}\frac{\eta_i^2}{\lambda_i}p(\theta_1)^2
       + \sum\limits_{i \in \bar\pi_k^a}\frac{\eta_i^2}{\lambda_i} p(\lambda_i)^2\\
    &\leq \sum\limits_{i \in \pi_k^a}\frac{\eta_i^2}{\lambda_i} p_1^{D}(\theta_1)^2 
       + \sum\limits_{i \in \bar\pi_k^a}\frac{\eta_i^2}{\lambda_i} p_1^{D}(\lambda_i)^2 
       = \|x^\ast - x_1^{D}\|_{A}^2.
\end{align*}
\end{proofEE}

An explicit form of $\theta_1$
is provided in the following theorem.
\begin{theoremE}\label{th:theta_1}
$\displaystyle{\theta_1 = \frac{r_0^\top A r_0 - \sum_{i \in \pi_k^a} \lambda_i(s_i^\top r_0)^2}{r_0^\top r_0 - \sum_{i \in \pi_k^a}(s_i^\top r_0)^2}}$ is the root of $p_1^{D}$.
\end{theoremE}

\begin{proofEE}
   From \Cref{th:ThetaOpt}, 
\[\|x^\ast - \hat{x}_1(F_{\theta_1})\|_A^2  = \min\limits_{p\in \mathbb{P}_1(0)} 
\sum\limits_{i \in \bar\pi_k^a}\frac{\eta_i^2}{\lambda_i} p(\lambda_i)^2.
\]
This minimization problem is equivalent to performing one iteration of CG on $\Sigma\,y = \eta$, where the SPD matrix is given by $\Sigma = \diag((\lambda_i)_{i\in \bar\pi_k^a})$ and $\eta = [\eta_i]_{i\in\bar\pi_k^a}^\top$, using the initial guess $y_0 = 0$. The corresponding Ritz value after this first iteration is~\cite[p.194]{YoussSaad},
\[
\theta_1= \frac{\eta^\top \Sigma\eta}{\eta^\top\eta} =\frac{\sum_{i \in \bar\pi_k^a}\eta_i^2\,\lambda_i}{\sum_{i \in \bar\pi_k^a}\eta_i^2}.
\]
$\theta_1$ can also be written in terms of $\pi_k^a$, i.e.
\begin{align*}
\theta_1=\frac{\sum\limits_{i =1}^n\lambda_i\,(s_i^\top r_0)^2-\sum\limits_{i \in \pi_k^a}\lambda_i\,(s_i^\top r_0)^2}
              {\sum\limits_{i =1}^n(s_i^\top r_0)^2-\sum\limits_{i \in \pi_k^a}(s_i^\top r_0)^2}=\frac{r_0^\top A r_0-\sum\limits_{i \in \pi_k^a}\lambda_i\,(s_i^\top r_0)^2}
              {r_0^\top r_0-\sum\limits_{i \in \pi_k^a}(s_i^\top r_0)^2}.    
\end{align*}
\end{proofEE}

In the next subsection, we investigate a choice of $\theta$ that yields a smaller error than the unpreconditioned iterate at every {$\mathbf{PCG}(F_\theta)$} iterate. 

\subsection{\boldmath{\boldmath{\texorpdfstring{$\theta$}{theta}}} providing lower error in energy norm}
\label{SubSec:Case1}
In this section,  we focus on the first case where $j_0 = k+1$, i.e., the preconditioner is constructed using the largest k eigenvalues. 
We present the analysis only for this case. The other cases can be treated in a similar manner. 

We now characterize the values of $\theta$ for which the preconditioned iterates achieve a smaller error than the unpreconditioned ones. Specifically, we focus on the interval $\theta \in [\lambda_{k+1}, \lambda_k]$ and show that, for such choices, there exists a polynomial that promotes favorable {$\mathbf{PCG}(F_\theta)$ convergence}.

\begin{lemmaE}\label{lemma:Poly_comp}
For any $\theta \in [\lambda_{k+1}, \lambda_k]$, and any polynomial $p$ of degree $\ell$ such that $p(0) = 1$ and whose roots all lie in $[\lambda_n, \lambda_1]$, there exists a polynomial $\hat{p}$ of degree $\ell$ such that $\hat{p}(0) = 1$ and
\begin{align*}
    \lvert \hat{p}(\theta) \rvert & \leq \lvert p(\lambda_i) \rvert, \quad i = 1, \ldots, k\ \\
    \lvert \hat{p}(\lambda_i) \rvert & \leq \lvert p(\lambda_i) \rvert, \quad i = k+1, \ldots, n.
\end{align*}
\end{lemmaE}
\begin{proofEE}
        Let us denote $(\mu_j)_{1 \leq j \leq \ell}$ the roots of the polynomial $p$ given in decreasing order, so $p(\lambda) = \prod_{i = 1}^{\ell}\left( 1 - \frac{\lambda}{\mu_i} \right)$ for any $\lambda \geq 0$. Three cases may occur:

        \underline{Case 1:} For all $j \in \{1, \ldots, \ell\}$, $\mu_j < \theta$. We choose $\hat{p}(\lambda) = p(\lambda)$. Then for $i \in \{k+1, \ldots, n\}$, we have $\lvert \hat{p}(\lambda_i) \rvert = \lvert p(\lambda_i) \rvert$. For $i \in \{1, \ldots, k\}$, using the property that $\mu_j < \theta \leq \lambda_i$, we obtain 
        $$1 - \frac{\lambda_i}{\mu_j} \leq 1 - \frac{\theta}{\mu_j} \leq 0.$$ 
        Thus, we have $\lvert 1 - \frac{\theta}{\mu_j} \rvert \leq \lvert 1 - \frac{\lambda_i}{\mu_j} \rvert$, and consequently $\lvert \hat{p}(\theta) \rvert \leq \lvert p(\lambda_i) \rvert$.

           \underline{Case 2:} For all $j \in \{1, \ldots, \ell\}$, $\theta \leq \mu_j$. We choose $\hat{p}(\lambda) = \prod_{j = 1}^{\ell}\left( 1 - \frac{\lambda}{\theta} \right)= \left( 1 - \frac{\lambda}{\theta} \right)^{\ell}$. Then simply for $i \in \{1, \ldots, k\}$, $\lvert \hat{p}(\theta) \rvert = 0 \leq \lvert p(\lambda_i) \rvert$. For $i \in \{k+1, \ldots, n\}$, using the property $\lambda_{k+1} \leq \theta \leq \mu_j$, we obtain        $$0 \leq 1 - \frac{\lambda_i}{\lambda_{k+1}} \leq 1 - \frac{\lambda_i}{\theta} \leq 1 - \frac{\lambda_i}{\mu_j}.$$        Therefore, for $i = k+1, \ldots, n$, $\lvert \hat{p}(\lambda_i) \rvert \leq \lvert p(\lambda_i) \rvert$.
        
       \underline{Case 3:}  Let $s \in \{1, \ldots, \ell-1\}$ such that for $j = 1, \ldots, s$, $\theta \leq \mu_j \leq \lambda_1$, and for $j = s+1, \ldots,\ell$, $\lambda_n \leq \mu_j < \theta$. We choose 
        $$\hat{p}(\lambda) = \prod_{j = 1}^{s}\left( 1 - \frac{\lambda}{\theta} \right) \prod_{j = s+1}^{\ell}\left( 1 - \frac{\lambda}{\mu_j} \right)= \left( 1 - \frac{\lambda}{\theta} \right)^s \prod_{j = s+1}^{\ell}\left( 1 - \frac{\lambda}{\mu_j} \right) .$$
        We have $\hat{p} (\theta) = 0$, so $\lvert \hat{p}(\theta) \rvert \leq \lvert p(\lambda_i) \rvert$ for $i \in \{1, \ldots, k\}$. For $i \in \{k+1, \ldots, n\}$ and  $j \in \{1, \ldots, s\}$, we have 
        $$0 \leq 1 - \frac{\lambda_i}{\lambda_{k+1}} \leq 1 - \frac{\lambda_i}{\theta} \leq 1 - \frac{\lambda_i}{\mu_j},$$
        because $\lambda_{k+1} \leq \theta \leq \mu_j$. Therefore, for $i = k+1, \ldots, n$, $\lvert \hat{p}(\lambda_i) \rvert \leq \lvert p(\lambda_i) \rvert$.
\end{proofEE}

Now, we can present a result that enables comparing the error in energy norm between the preconditioned and the unpreconditioned iterates.
\begin{theoremE}
\label{theorem : error_UAU_error_A}
Let $\theta \in [\lambda_{k+1}, \lambda_k]$. Then, $
\| x^\ast - \hat{x}_{\ell}(F_{\theta}) \|_{A} \leq \| x^\ast - x_{\ell} \|_{A}$.

\end{theoremE}
\begin{proofEE}
    From~\eqref{eq:ErrorRitz},
\begin{equation}
  \left\| x^\ast - x_{\ell}\right\|_{A}^2 = \min_{p \in \mathbb{P}_\ell(0)} \lVert p \left(A \right) \left( x^\ast - x_0\right) \rVert_{A}^2 = \sum_{i= 1}^{n}\frac{\eta_i^2}{\lambda_i} p_\ell(\lambda_i)^2.\label{eq:error_A}  
\end{equation}
From \Cref{lemma:Poly_comp},
there exists a polynomial $\hat{p}$ of degree $\ell$ with  $\hat{p}(0) = 1$ such that
\begin{align*}
\lvert \hat{p}(\theta) \rvert & \leq \lvert p_{\ell}(\lambda_i) \rvert, \quad i \in \{1, \ldots, k\}\\
\lvert \hat{p}(\lambda_i) \rvert & \leq \lvert p_\ell(\lambda_i) \rvert, \quad i \in \{k+1, \ldots, n\}.
\end{align*}
Applying these inequalities to~\eqref{eq:error_A} yields 
\begin{align*}
   \hspace{1cm} \left\| x^\ast- x_{\ell}\right\|_{A}^2  = \sum_{i = 1}^{n} \frac{\eta_i^2}{\lambda_i} p_{\ell}(\lambda_i)^2 &  \geq \sum_{i =1}^{k} \frac{\eta_i^2}{\lambda_i} \hat{p}(\theta)^2 + \sum_{i =k+1}^{n} \frac{\eta_i^2}{\lambda_i} \hat{p}(\lambda_i)^2\\
          & \hspace{-2cm} \geq \min_{p\in \mathbb{P}_{\ell}(0)} 
          \sum_{i=1}^{k} \frac{\eta_i^2}{\lambda_i}p(\theta)^2 + \sum_{i=k+1}^{n} \frac{\eta_i^2}{\lambda_i}p(\lambda_i)^2 = \left\| x^\ast - \hat{x}_{\ell}(F_{\theta})\right\|_{A}^2.
\end{align*}
\end{proofEE}

\Cref{theorem : error_UAU_error_A} offers a range of choices for $\theta$. Let us remind that to construct $F_{\theta}$, we are given $k$ eigenpairs. As a result, one practical choice is $\theta = \lambda_k$.


In the next section, we present and analyze the  choice of setting $\theta = \lambda_n$.

\subsection{$\theta$ as the smallest eigenvalue}\label{subsec:practical_choice}

For matrices of the form $A = \rho I_n + X$ (with $X \in \mathbb{R}^{n \times n}$ symmetric positive semidefinite and rank deficient), it is common to choose $\theta = \lambda_n$ \citep{LMP}. 
Such a structure of $A$ arises naturally in regularized least-squares problems, where $\lambda_n = 1$ \citep{TshiGratWeavSart08}.
Motivated by this strategy of choosing $\theta$ as the smallest eigenvalue, we analyze in the next theorem how the error in the energy norm along the iterates of $\mathbf{PCG}(F_{\lambda_n})$ can be compared to that of deflated CG.

\begin{theoremE}\label{theorem:CGbehavior}
Let $\varepsilon \geq 0$ be a fixed tolerance and assume that $s_n^\top r_0 = \eta_n \neq 0$. There exists an iterate $\ell_0$ of
deflated CG such that  
    $|\lambda_n - v^{(\ell_0)}_{\ell_0}| \leq \varepsilon$,
    where $v^{(\ell_0)}_{\ell_0}$ is the smallest root of the polynomial $\{p_{\ell_0}^{\text{D}}\} = \arg\min\limits_{p\in \mathbb{P}_{\ell_0}(0)} 
    \sum_{i\in \bar\pi_k^a} \frac{\eta_i^2}{\lambda_i}{p}(\lambda_i)^2.$
    Let $x_\ell^{\text{D}}$ be generated as~\eqref{eq:x_l^D}. Then, the following bounds hold:  
    \begin{equation} \label{eq:SmallEigen}
         \begin{cases}
            \|x^\ast - \hat{x}_\ell(F_{\lambda_n})\|_{A}^2 \leq \|x^\ast - x^{D}_\ell\|_{A}^2 + \sum\limits_{i\in\pi_k^a} \frac{\eta_i^2}{\lambda_i}, & \text{if } \ell < \ell_0, \\[10pt]
            \|x^\ast - \hat{x}_\ell( F_{\lambda_n})\|_{A}^2 \leq \|x^\ast - x^{D}_\ell\|_{A}^2 + \frac{\varepsilon^2}{\lambda_n^2}  \sum\limits_{i\in\pi_k^a} \frac{\eta_i^2}{\lambda_i}, & \text{if } \ell \geq \ell_0.
        \end{cases}
    \end{equation}
\end{theoremE}
\begin{proofEE}
Let us first show the existence of $\ell_0$. Let $(v^{(\ell)}_{j})_{1\leq j\leq \ell}$ denote the roots of $p_\ell^{D}$ given in decreasing order. 
Since $\eta_n \neq 0$, the smallest eigenvalue \(\lambda_n \) will be reached in at most $n-k$ iterations of deflated CG. Therefore, the set $\{\ell \in \{1, \ldots, n-k\} \mid | \lambda_n - v^{(\ell)}_{\ell}| \leq \varepsilon\}$
is non-empty. Let us define $\ell_0 = \min \{\ell \in \{1, \ldots, n-k\} \mid |\lambda_n - v^{(\ell)}_{\ell}| \leq \varepsilon\}$.
At the $\ell$-th iterate, {$\mathbf{PCG}(F_{\lambda_n})$}  satisfies 
\begin{align}
    \|x^\ast - \hat{x}_\ell(F_{\lambda_n})\|_{A}^2 \nonumber &= \min \limits_{p \in \mathbb{P}_{\ell}(0)}\sum_{i\in\pi_k^a} \frac{\eta_i^2}{\lambda_i} p(\lambda_n)^2 + \sum_{i\in\bar\pi_k^a} \frac{\eta_i^2}{\lambda_i} p(\lambda_i)^2\\
    \nonumber & \leq \sum_{i\in\pi_k^a} \frac{\eta_i^2}{\lambda_i} p_\ell^{D}(\lambda_n)^2 + \sum_{i\in\bar\pi_k^a} \frac{\eta_i^2}{\lambda_i} p_\ell^{D}(\lambda_i)^2\\
    &= \sum_{i\in\pi_k^a} \frac{\eta_i^2}{\lambda_i} p_\ell^{D}(\lambda_n)^2 + \|x^\ast - x^{D}_\ell\|_{A}^2.   
    \label{enorm_theta_seigen}
\end{align}  
For \( \ell < \ell_0 \), since
$
p_\ell^{D}(\lambda_n)^2 = \prod\limits_{j=1}^{\ell} \left(1 - \frac{\lambda_n}{v^{(\ell)}_{j}}\right)^2  \leq 1,
$
we obtain from~\eqref{enorm_theta_seigen}
\[
\|x^\ast - \hat{x}_\ell(F_{\lambda_n})\|_{A}^2 \leq \|x^\ast - x^{D}_\ell\|_{A}^2 + \sum\limits_{i\in\pi_k^a} \frac{\eta_i^2}{\lambda_i}.
\] 
For \( \ell \geq \ell_0 \), by using the interlacing property,
\[
p_\ell^{D}(\lambda_n)^2 
\leq \left(1 - \frac{\lambda_n}{v^{(\ell)}_{\ell}}\right)^2 \leq \left(1 - \frac{\lambda_n}{v^{(\ell_0)}_{\ell_0}}\right)^2
\leq \frac{\varepsilon^2}{\lambda_n^2}.
\]  
As a result, from~\eqref{enorm_theta_seigen},
$\|x^\ast - \hat{x}_\ell(F_{\lambda_n})\|_{A}^2 \leq \|x^\ast - x^{D}_\ell\|_{A}^2 + \frac{\varepsilon^2}{\lambda_n^2} \sum\limits_{i\in\pi_k^a} \frac{\eta_i^2}{\lambda_i}$.
\end{proofEE}
  
\cref{theorem:CGbehavior} shows two-phase convergence behavior. It states that  if $\ell< \ell_0$, the terms  
$\sum_{i \in \pi_k^a} \frac{\eta_i^2}{\lambda_i}$  
may slow down convergence and even lead to worse results compared to the unpreconditioned case.  
However, if $\ell \geq \ell_0$, the behavior is expected to resemble that of deflated CG.

\section{Numerical experiments}
\label{sec:NumExp}

\subsection{Approximating largest and smallest eigenvalues}

In practice, the largest eigenvalues can be approximated when solving sequences of linear least-squares problems arising in nonlinear least-squares problems~\citep{TshiGratWeavSart08}.
The Lanczos process underlying CG can be exploited on each
linear problem to extract approximate spectral information~\citep{YoussSaad}.  This spectral information is used to design a preconditioner for the
subsequent linear least-squares problem~\citep{TshiGratWeavSart08}. Another approach to approximate the largest eigenvalues is randomized eigenvalue decomposition~\citep{Halko}, which has been shown to be efficient for constructing preconditioner to solve SPD linear
systems~\citep{frangella2023randomized}.

For the smallest eigenvalues, in the context of solving linear systems with
multiple right-hand sides, \citet{Stathopoulos} proposed a CG-based approach
to approximate the smallest eigenvalues and their corresponding eigenvectors
relying on Rayleigh--Ritz projections  and to reuse them as a deflation subspace for later linear systems. There is another way to approximate the smallest eigenvalues and their corresponding eigenvectors by using  harmonic projection techniques~\citep{DeflatedCG, parks2006recycling}. A
comparative study between Rayleigh--Ritz and harmonic projections in the context
of approximating the smallest eigenpairs was presented
in~\citep{venkovic2020comparative}.

\subsection{Computational Complexity}
The computational cost of calculating the eigenpairs depends on the strategy employed and on which eigenpairs are targeted. For instance, approximating eigenspectrum from CG coefficients at iteration $k$ requires $\mathcal{O}(nk^2)$ flops, which accounts for the eigendecomposition of a $k \times k$ tridiagonal matrix as well as the matrix-vector products with the $n$-dimensional Lanczos vectors (normalized residual vectors produced by CG) \citep{DataAssimilaton}. When randomized algorithms are used to approximate the eigenpairs, the overall cost is dominated by at least one matrix–vector product with $A$ \citep{Halko}. A key advantage of randomized methods is that they are parallelizable and can provide spectral information in advance for PCG.

Applying $F_{\theta}$ requires storing the $k$ selected eigenvectors $\{s_i\}_{i\in\pi_k^a}$, the associated eigenvalues $\{\lambda_i\}_{i\in\pi_k^a}$, and the scalar parameter $\theta$. This results in a memory cost of $\mathcal{O}(kn)$.
The total computational cost of performing matrix-vector products with $F_{\theta}$ is $\mathcal{O}(kn)$ flops~\citep{LMP}.
    There is an extra
cost associated with constructing $\theta_1$, but it is dominated by a single
multiplication with the matrix $A$.

\subsection{Experimental setup}
\label{subsection:Experimentalsetup}
We restrict our attention to \( Ax = b \), where \( A \in \mathbb{R}^{n \times n} \) is a diagonal matrix with \( n = 10^6 \). In this setting, the eigenvalues used to build the preconditioner are exact, and the preconditioner \( F_\theta \) is therefore diagonal. We consider the eigenvalue distribution~\citep{StrakosGreenbaum}:

\begin{equation}
    \label{eq:strakos_eigenvalues}
    \lambda_{i} = \lambda_n + \left( \frac{n - i}{n - 1} \right) (\lambda_1 - \lambda_n) \rho^{i-1}, \quad \text{for} \quad i = 1, \ldots, n,
\end{equation}
with $\lambda_1 = 10^6, \quad \lambda_n = 1$ and $\rho = 0.75$. The preconditioner \( F_{\theta} \) is constructed using \( k\in\{30,40,50\}\) largest eigenvalues. We denote $\theta_{\mathrm{m}} = \bigl(\lambda_{k}+\lambda_{n}\bigr)/2$ and $\theta_{\mathrm{r}} =\lambda_k$. 

The choices for $\theta_{\mathrm{m}}$ and $\theta_{\mathrm{r}}$ are motivated from \Cref{SubSec:median}. In addition, further theoretical results are provided for $\theta_{\mathrm{r}}$ in \Cref{SubSec:Case1}.

We use $b = [1, \ldots, 1]^\top/\sqrt{n}$, and $x_0 = 0$. We compare the performance of the methods of \Cref{tab:methods} in terms of the relative error in energy norm at each iteration. 

\begin{table}[ht]
\centering
\begin{tabular}{ll}
\hline
Method & Description \\ \hline
CG & \Cref{Algo:CG} applied to $Ax = b$ \\
$\mathbf{PCG}(F_{\theta_\mathrm{r}})$
& \Cref{Algo:PCG} applied to $Ax = b$ using $F = F_{\theta_\mathrm{r}}$ \\ 
$\mathbf{PCG}(F_{\theta_\mathrm{1}})$
 &
\Cref{Algo:PCG} applied to $A x = b$ using $F = F_{\theta_\mathrm{1}}$\\
$\mathbf{PCG}(F_{\theta_\mathrm{m}})$
 & \Cref{Algo:PCG} applied to $A x = b$ using $F = F_{\theta_\mathrm{m}}$ \\
\textbf{DefCG} &  \Cref{Algo:CG} applied to the projected system~\eqref{eq:projected_sys}\\ \hline
\end{tabular}
\caption{Description of methods used in the numerical experiments.}
\label{tab:methods}
\end{table}

In all plots below, the eigenvalues are indexed by $u = (i - \tfrac{1}{2})/N$ for $i=1,\dots,N$,
so that $u \in (0,1)$ represents the relative position of the $i$-th
eigenvalue in the ordered spectrum. The abscissa is displayed on a
logit scale, $\mathrm{logit}(u) = \log \!\left(u/(1-u)\right)$,
which expands both ends of the interval $(0,1)$. This transformation
enhances the visibility of extreme eigenvalues (corresponding to very small or very large indices) by spreading them apart, while keeping the bulk of the spectrum compressed in the center. 

In the supplementary material \citep{supplementary2026}, we present experiments for other eigenvalue distributions, allowing us to recover the different cases described in \Cref{subsec:pi_k}. 

\subsection{Numerical results}
\label{Sec:NumericalResultsC1}
\Cref{fig:larger} shows eigenvalue distributions of the preconditioned matrix $FA$ for different choices for $k$ and $F$. 
It also shows the convergence of {$\mathbf{PCG}(F)$} in terms of the error in the relative energy norm. Note that the CG does not depend on $k$; for this reason, and for clarity of presentation, the corresponding CG results are shown only in the first column of \Cref{fig:larger} (i.e., $k=30$).


\begin{figure}[htb]
\centering
\includegraphics[width=1.\linewidth]{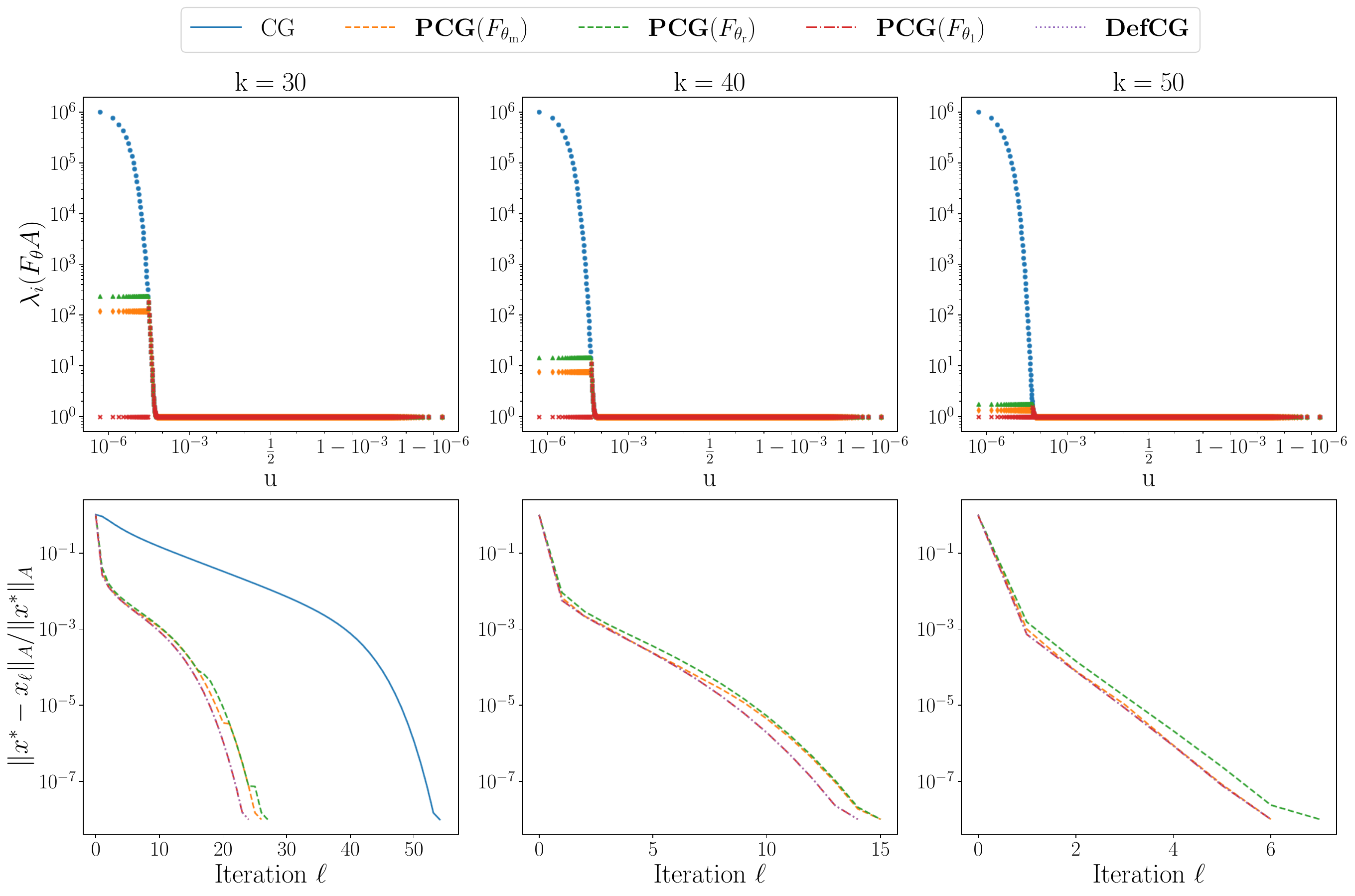}
    \caption{Top: Eigenvalue distributions of $F_{\theta}A$ for $k = 30$, $40$, and $50$ (from left to right).
Bottom: Relative errors in energy norm versus iteration $\ell$, including \textbf{DefCG}.}
\label{fig:larger}
\end{figure}

As shown in \Cref{fig:larger}, all methods require fewer iterations than the unpreconditioned case to achieve an approximation of the solution for which $\|x^\ast - x_\ell\|_A \approx 10^{-8}\|x^\ast\|_A$.  $\mathbf{PCG}(F_{\theta_\mathrm{r}})$  achieves better convergence than CG at all iterates, as guaranteed by~\Cref{theorem : error_UAU_error_A}.
 $\mathbf{PCG}(F_{\theta_\mathrm{1}})$ achieves better convergence than CG and also outperforms both $\mathbf{PCG}(F_{\theta_\mathrm{r}})$ and $\mathbf{PCG}(F_{\theta_\mathrm{m}})$. Its convergence remains close to that of \textbf{DefCG}.
$\mathbf{PCG}(F_{\theta_\mathrm{m}})$ produces iterates closer to those of \textbf{DefCG}, as this behaviour is explicitly motivated for the choice of $\theta_{m}$ as explained in \Cref{SubSec:median}. In addition, $\mathbf{PCG}(F_{\theta_\mathrm{m}})$  outperforms slightly $\mathbf{PCG}(F_{\theta_\mathrm{r}})$, which can be explained by $\frac{\alpha(\theta_{\mathrm{m}})}{\theta_{\mathrm{m}}} \leq \frac{\alpha(\theta_{\mathrm{r}})}{\theta_{\mathrm{r}}}$ (See \Cref{theorem:midrange}).

\subsection{Discussion}

Both the theoretical analysis and the numerical experiments suggest that deflated CG gives the best result. However, in practice, since only approximate eigenpairs are typically available, deflated CG can become computationally demanding for large-scale problems. Constructing the deflation projector \citep{DeflatedCG2} requires storing additional vectors and involves extra multiplications with $A$, as well as additional vector operations. This results in non-negligible overhead, both in memory usage and in the number of matrix-vector products.

One motivation for the scaled spectral preconditioner is to mimic the favorable convergence behavior of deflated CG while significantly reducing this overhead. In follow-up work, we will provide theoretical and numerical result showing that the scaled spectral preconditioner remains robust when approximate eigenpairs are used and can achieve an approximation close to deflated CG at low cost.

\subsection{Numerical experiments with the choice of $\theta = \lambda_n$}
To show the behavior in \Cref{subsec:practical_choice}, we consider the same distribution as~\eqref{eq:strakos_eigenvalues} with $n=100, \rho =0.75, \lambda_1 = 10^4$ and $\lambda_n = 1$. The preconditioner uses the $k=10$ largest eigenvalues. 

We consider two different choices for the right-hand side vector \( b \), based on the distribution of \( \zeta_i := \frac{\eta_i^2}{\lambda_i} \), defined as follows: (i) \textbf{Fast Decay of \( \zeta_i \):}  
    a fast decay in the values of \( \zeta_i \) is given by 
   \( \zeta_i = \zeta_n + ( \frac{i - 1}{n - 1} ) (\zeta_1 - \zeta_n) 0.9^{n-i} \).
    We set \( \zeta_N = 1 \) and \( \zeta_1 = 10^3 \). The right-hand side vector is then defined as $
    b = [\sqrt{\zeta_1 \lambda_1}, \sqrt{\zeta_2 \lambda_2}, \ldots, \sqrt{\zeta_n \lambda_n}]$. (ii) \textbf{Fast Growth of \( \zeta_i \):}  
    we reverse the order of \( \zeta_i \) defined in the previous case to obtain a fast-growing distribution: $
    b = [\sqrt{\zeta_{n} \lambda_1}, \sqrt{\zeta_{n-1} \lambda_2}, \ldots, \sqrt{\zeta_{1} \lambda_n}]$.

\begin{figure}[ht]
\centering
\includegraphics[width=1.\linewidth]{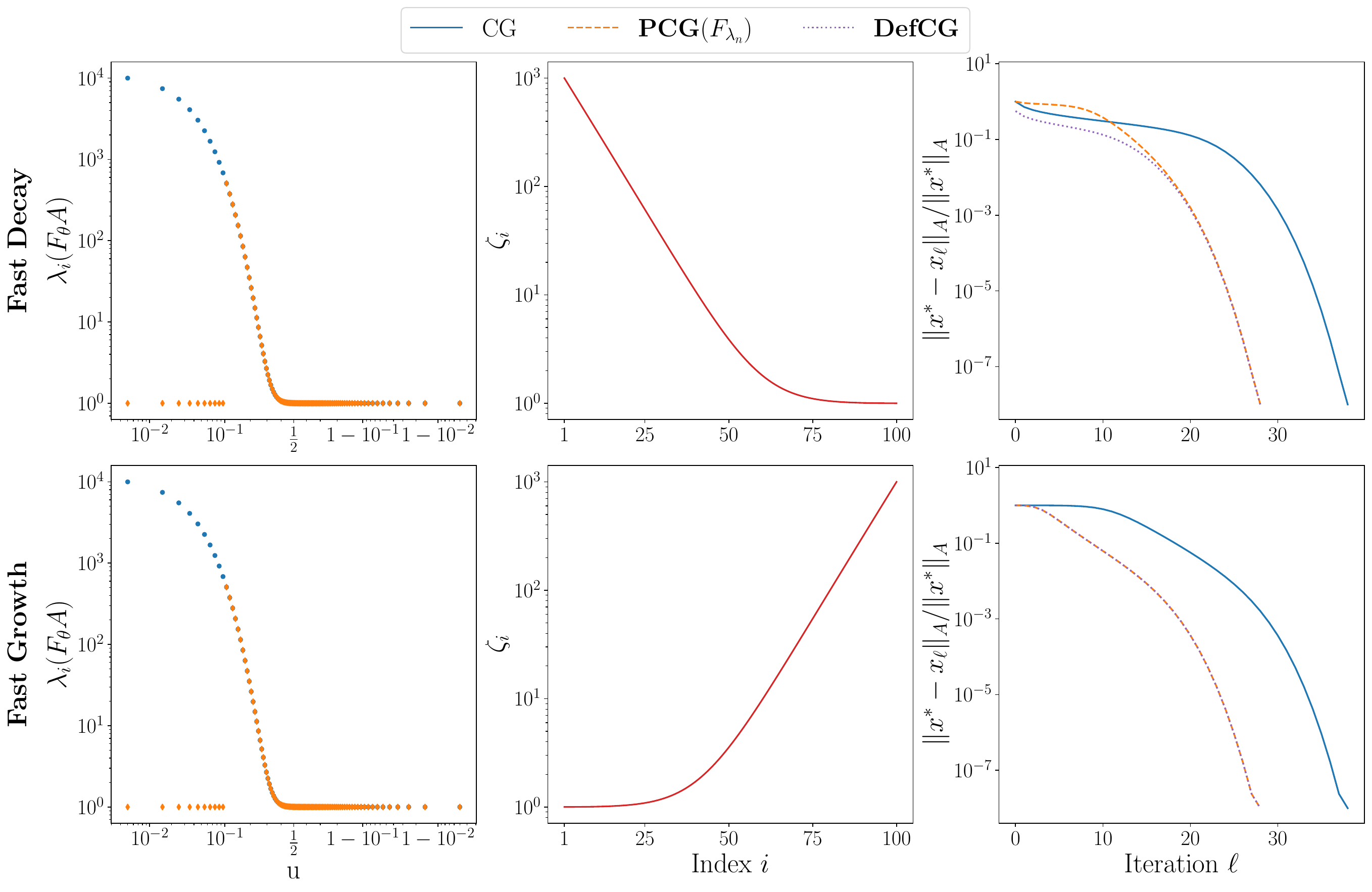}
    \caption{
    Eigenvalues of the preconditioned matrix, the values of $\zeta_i$, and the energy norm of the relative error along  iterations for \textbf{fast decay} and \textbf{fast growth} of $\zeta_i$.
    }
    \label{fig:rhs_dec}
\end{figure}


As shown in \Cref{fig:rhs_dec}, $\mathbf{PCG}(F_{\lambda_{n}})$ reduces the number of iterations to converge, but it is not necessarily better than CG, especially for $\ell < 15$. We observe that the convergence improves for $\ell \ge 15$ and becomes closer to \textbf{DefCG} for $\ell \ge 20$, this can be interpreted as a consequence of the convergence of the smallest Ritz value toward $\lambda_n$ in \textbf{DefCG}, in accordance with the results of \Cref{theorem:CGbehavior} (see the supplementary material~\cite[Fig. SM3]{supplementary2026}). Similarly, the slow convergence of $\mathbf{PCG}(F_{\lambda_{n}})$ in the early iterations can be explained by \Cref{theorem:CGbehavior} where 
$\sum_{i=1}^{k} \zeta_i $ is large enough to slow down the convergence. For the case with a fast growth distribution of $\zeta_i$, $\mathbf{PCG}(F_{\lambda_{n}})$  exhibits faster convergence in the early iterations, similar to \textbf{DefCG}, because the terms $\sum_{i=1}^{k}\zeta_i $ in
\eqref{eq:SmallEigen} 
remain very small.

\section{Conclusion and perspectives}
\label{sec:conclusion} 
We have introduced and analyzed a class of scaled spectral preconditioners for the conjugate gradient method applied to large symmetric positive-definite linear systems with extreme eigenvalues, {particularly when the CG number of iterations is limited.} Starting from an optimization viewpoint, we derived a preconditioner that minimizes the error after a prescribed number of iterations. A key outcome of our analysis is that the preconditioner should be built from the extreme eigenvalues, and that the position of the resulting eigenvalue cluster can be chosen according to several principled criteria. The numerical experiments on matrices with extreme eigenvalues confirm that the scaled spectral preconditioners can significantly accelerate the rate of convergence of PCG.

In follow-up work, we will provide a detailed theoretical analysis of the preconditioner when it is used for a sequence of SPD matrices, or equivalently, when it is constructed from approximate eigenvalues.
\newpage

\appendix

\section{Proofs}
\label{appendix:Proofs}
\printProofs
\section{Properties of the projected system~\eqref{eq:projected_sys}}
\label{appendix:projected_sys}

\begin{proposition}\label{prop:PA=AP}
The matrix $PA$ defined in~\eqref{eq:projected_sys} is symmetric positive semi-definite. In addition,
\begin{itemize}
    \item $PA = AP = PAP.$
    \item The system $PA z = Pb$ is consistent.
\end{itemize}   
\end{proposition}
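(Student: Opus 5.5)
The statement concerns the projector $P = I_n - \sum_{i\in\pi_k^a} s_i s_i^\top$. The plan is to work entirely in the eigenbasis of $A$. Since the $s_i$ are orthonormal, $P = \sum_{i\in\bar\pi_k^a} s_i s_i^\top$, so $P = S E S^\top$ with $E$ the diagonal $0/1$ matrix having ones exactly at the indices in $\bar\pi_k^a$. With $A = S\Lambda S^\top$ and $S^\top S = I_n$, every product reduces to a product of diagonal matrices between $S$ and $S^\top$. Such diagonal matrices commute, and $E^2 = E$.

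First I establish the identities. We have $PA = S E\Lambda S^\top$ and $AP = S\Lambda E S^\top$, and these are equal because diagonal matrices commute. Likewise $PAP = S E\Lambda E S^\top = S E^2\Lambda S^\top = S E\Lambda S^\top$, using $E^2 = E$. Hence $PA = AP = PAP$.

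Next I obtain symmetry and semidefiniteness. Symmetry of $PA$ follows from $(PA)^\top = A^\top P^\top = AP = PA$. For semidefiniteness, I compute
\[
x^\top PA x = \sum_{i\in\bar\pi_k^a} \lambda_i (s_i^\top x)^2 \ge 0,
\]
since every $\lambda_i > 0$. Equivalently, $PA = S(E\Lambda)S^\top$ has eigenvalues $\lambda_i$ for $i\in\bar\pi_k^a$ and $0$ with multiplicity $k$. It is singular because $k \ge 1$, so it is only semidefinite and not definite.

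Finally, for consistency I exhibit an explicit solution. Take $z = \sum_{i\in\bar\pi_k^a} \lambda_i^{-1} s_i s_i^\top b$. Then
\[
PAz = \sum_{i\in\bar\pi_k^a} \lambda_i s_i s_i^\top \sum_{j\in\bar\pi_k^a} \lambda_j^{-1} s_j s_j^\top b = \sum_{i\in\bar\pi_k^a} s_i s_i^\top b = Pb,
\]
by orthonormality. Alternatively, $Pb \in \operatorname{range}(P) = \operatorname{span}\{s_i : i\in\bar\pi_k^a\} = \operatorname{range}(PA)$.

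None of these steps poses a real obstacle. The only point needing care is to state that $P$ is the orthogonal projector expressed in the full orthonormal eigenbasis, which is what makes all the diagonal manipulations legitimate.
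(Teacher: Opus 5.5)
Your argument is correct and complete, but it is not the paper's route. The paper gives no computation and simply refers to \citep{coulaud2013deflation}, where the deflation projector is built from a general deflation basis $W$, in the standard form $I_n - AW(W^\top A W)^{-1}W^\top$. There the general facts are that $PA = AP^\top = PAP^\top$, that $PA$ is symmetric positive semidefinite, and that $PAz = Pb$ is consistent. To read the present statement off that reference, one must specialize to $W = [s_i]_{i\in\pi_k^a}$. Since $AW = W\Lambda_{\pi_k^a}$ and $W^\top A W = \Lambda_{\pi_k^a}$, the general projector collapses to $I_n - WW^\top$, which is the symmetric $P$ used here. Only then do the transposes drop out to give $PA = AP = PAP$.

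Your diagonalization in the eigenbasis ($P = SES^\top$, commuting diagonal factors, $E^2 = E$) skips that reduction and is fully self-contained. It also makes clear that the one hypothesis really used is that the deflation subspace is an exact invariant subspace of $A$. In addition, it gives the spectrum of $PA$ and the explicit solution $z = \sum_{i\in\bar\pi_k^a}\lambda_i^{-1}s_is_i^\top b$, which is the structure exploited in \Cref{prop:reduced_system} and \Cref{th:xast_xdef}.

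What the cited general framework buys instead is robustness. Suppose the $s_i$ were only approximate eigenvectors, the setting the paper announces as follow-up work. Then the orthogonal projector $I_n - WW^\top$ would no longer commute with $A$, so $PA$ would not even be symmetric. The general deflation projector $I_n - AW(W^\top AW)^{-1}W^\top$, by contrast, still yields a symmetric positive semidefinite, consistent deflated system.
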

\begin{proof}
    See \citep{coulaud2013deflation}. Note that in \citep{coulaud2013deflation}, $P$ is defined for a general deflation subspace $W$. 
\end{proof}

\begin{proposition}\label{prop:reduced_system}
   The iterate $z_\ell$ generated by CG when solving~\eqref{eq:projected_sys} starting with $z_0 = x_0$ satisfy $$z_\ell = \sum_{i\in\pi_k^a} \, s_i s_i^\top x_0 +  S_{\bar\pi_k^a} \hat{y}_\ell,$$ where $S_{\bar\pi_k^a} = [s_i]_{i\in\bar\pi_k^a}$. Here, $\hat{y}_\ell$ generated with CG when solving $\Lambda_{\bar\pi_k^a} \hat{y} = S_{\bar\pi_k^a}^\top b$ starting with $\hat{y}_0 = S_{\bar\pi_k^a}^\top x_0$ where $\Lambda_{\bar\pi_k^a} = \diag((\lambda_i)_{i\in\bar\pi_k^a})$.
\end{proposition}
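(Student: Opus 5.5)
We plan to prove \Cref{prop:reduced_system} by induction on $\ell$, comparing the CG recurrences of \Cref{Algo:CG} applied to $PAz = Pb$ (from $z_0 = x_0$) with those applied to $\Lambda_{\bar\pi_k^a}\hat y = S_{\bar\pi_k^a}^\top b$ (from $\hat y_0 = S_{\bar\pi_k^a}^\top x_0$). Write $S_{\pi_k^a} = [s_i]_{i\in\pi_k^a}$. Then $P = S_{\bar\pi_k^a}S_{\bar\pi_k^a}^\top$ and $PA = S_{\bar\pi_k^a}\Lambda_{\bar\pi_k^a}S_{\bar\pi_k^a}^\top$. By \Cref{prop:PA=AP}, $PA$ is symmetric positive semidefinite and the system is consistent, so CG is well defined until convergence.

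The claimed correspondence consists of three identities, to be proved jointly:
\[
z_j = S_{\pi_k^a}S_{\pi_k^a}^\top x_0 + S_{\bar\pi_k^a}\hat y_j,\qquad
r_j^{z} = S_{\bar\pi_k^a}\hat r_j,\qquad
p_j^{z} = S_{\bar\pi_k^a}\hat p_j .
\]
Here $r^z, p^z$ are the residuals and directions of CG on the projected system, and $\hat r, \hat p$ are those of CG on the reduced system.

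For the base case $j=0$, the first identity is just $x_0 = (S_{\pi_k^a}S_{\pi_k^a}^\top + S_{\bar\pi_k^a}S_{\bar\pi_k^a}^\top)x_0$. For the residual,
\[
r_0^z = Pb - PAx_0 = S_{\bar\pi_k^a}\bigl(S_{\bar\pi_k^a}^\top b - \Lambda_{\bar\pi_k^a}S_{\bar\pi_k^a}^\top x_0\bigr) = S_{\bar\pi_k^a}\hat r_0 ,
\]
and $p_0^z = r_0^z$, $\hat p_0 = \hat r_0$ give the third identity.

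For the inductive step, the orthonormal columns of $S_{\bar\pi_k^a}$ preserve inner products:
\[
(r_j^z)^\top r_j^z = \hat r_j^\top \hat r_j,\qquad
(p_j^z)^\top PA\,p_j^z = \hat p_j^\top S_{\bar\pi_k^a}^\top S_{\bar\pi_k^a}\Lambda_{\bar\pi_k^a}S_{\bar\pi_k^a}^\top S_{\bar\pi_k^a}\hat p_j = \hat p_j^\top \Lambda_{\bar\pi_k^a}\hat p_j .
\]
Hence the step lengths $\alpha_j$ agree, and similarly the coefficients $\beta_{j+1}$ agree. Also $PA\,p_j^z = S_{\bar\pi_k^a}\Lambda_{\bar\pi_k^a}\hat p_j$. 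Substituting these into the updates for $x$, $r$ and $p$ propagates all three identities to $j+1$. In particular, the component $S_{\pi_k^a}S_{\pi_k^a}^\top x_0$ never changes, because every update lies in the range of $S_{\bar\pi_k^a}$.

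The only delicate point is termination. CG on the singular but consistent system must not break down with $(p_j^z)^\top PA\,p_j^z = 0$ while $r_j^z \neq 0$. The inner-product identity settles this: that quantity equals $\hat p_j^\top\Lambda_{\bar\pi_k^a}\hat p_j$, which is positive whenever $\hat p_j \neq 0$ since $\Lambda_{\bar\pi_k^a}$ is SPD. Likewise $r_j^z = 0$ exactly when $\hat r_j = 0$, so both runs stop at the same iteration and the identity holds for every $\ell$ at which $z_\ell$ is defined.
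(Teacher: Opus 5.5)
Your proof is correct and is essentially the paper's argument. The paper writes $PA=\sum_{i\in\bar\pi_k^a}\lambda_i s_i s_i^\top$ and $Pb=\sum_{i\in\bar\pi_k^a}s_i s_i^\top b$, then cites \citep{hayami2018convergence} for decomposing CG onto the range and null space of $PA$; your induction on the CG recurrences does that decomposition explicitly and is self-contained. In particular, you show that the null-space component $\sum_{i\in\pi_k^a}s_i s_i^\top x_0$ never changes, that the range component is CG on the reduced system via the isometry $S_{\bar\pi_k^a}$, and that neither run breaks down before the other, where the paper relies on the citation.
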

\begin{proof}
    From the expression of $PA$, it is easy to see that $PA= \sum_{i\in\bar\pi_k^a} \, \lambda_i s_i s_i^\top$.
In addition $ P \;b = (I - \sum_{i\in\pi_k^a} \, s_i s_i^\top)b=  \sum_{i\in\bar\pi_k^a} \, s_i s_i^\top b = S_{\bar\pi_k^a}^\top S_{\bar\pi_k^a} b$. The proof follows directly from \citep{hayami2018convergence} by decomposing CG onto the range and null space of $PA$.
\end{proof}

\newpage
\bibliographystyle{plainnat}  
\bibliography{references}

\end{document}